\documentclass[12pt,a4paper, fleqn] {amsart}
\usepackage{amsmath,amscd,amsfonts,eucal,latexsym,amssymb,mathrsfs}
\usepackage{graphicx}
\newtheorem{teorema}{Theorem}[section]
\newtheorem{lemma}[teorema]{Lemma}

\theoremstyle{definition}

\theoremstyle{remark}

\theoremstyle{assioma}



\def\cd{\mathcal{D}}
\def\caa {\mathcal {A}}
\def\cbb {\mathcal {B}}
\def\wtcd{ \widetilde{\mathcal{D}}}
\def\wpcd{\widetilde{\mathcal{D}}'}
\font\picco=cmr10 scaled 850
\font\pio=cmr10 scaled 1230
\font\pioa=cmb10 scaled 1800
\def\veps{\varepsilon}
\def\cc{\tilde c}
\def\ccg{\mathcal G}
\def\sskip{\bigskip\bigskip\bigskip\bigskip\bigskip}
\def\ccb{{\mathcal B}}
\def\td{\widetilde{\mathcal D}}
\def\cd{\mathcal D}
\def\cdni#1{\cd^{(#1)}_{\mathcal N}}
\def\cdn{\cd_{\mathcal N}}
\def\tdn{\td_{\mathcal N}}
 \def\cdi#1{\cd^{(#1)}}
\def\tqo{{\tq}^o}
\def\tqb{{\tq}^b}
\def\cqz{{\cq}^z}
\def\cq{ {\mathcal Q}}
\def\tq{\widetilde {\mathcal Q}}
\def\wq{\widetilde   q}
\def\ccn{{\mathcal N}}
\def\cv{{\mathcal U}}
\def\bte{{\bf \Theta}}
\def\vpi{\varpi}
\def\cc{\tilde c}
\def\cn{{\mathcal N}}
\def\cer#1{\mathrel{\smash{\mathop{#1} \limits^{\circ} }}}
\def\enpr{\quad \vrule height .9ex width .8ex depth -.1ex}
\def\rr{{\, \mathcal R\,}}
\def\bar{\overline}
\def\nont{ \rlap {\ /} \Theta }
\def\bre{\mathbb {R}}
\def\bna{\mathbb N}
\def\bnaso{\mathbb N\setminus\{0\}}
\def\bin{\Bbb Z}
\def\bra{\Bbb Q}
\def\nor#1{||#1||_{\infty}}
\def\nop#1{|||#1|||}
\def\ac{\`}
\def\ze{\zeta}
\def\llim{\mathop{\longrightarrow}}
\def\ssim{\mathop{\sim}}
\def\Osc{\mathop{\text{Osc}}}
\def\vo{V^{(0)}}
\def\von{V^{(1)}}
\def\vn#1{V^{(#1)}}
\def\rvo{\bre^{V^{(0)}}}
\def\rvon{\bre^{V^{(1)}}}
\def\lb{\lambda}
\def\Lb{\Lambda}
\def\Lhb{\widehat\Lambda}
\def\Lbs{\Lambda^*}
\def\Lbt{\widetilde{\Lambda^*}}
\def\Lbn{\Lambda^*_{\infty}}
\def\Lbnt{\widetilde{\Lambda^*}_{\infty}}
\def\Av{(No)}
\def\cl{{\mathcal L}}
\def\cf{{\mathcal F}}
\def\pch{(Ex)}
\def\ppp{\Pi}
\def\ppm{\Pi^{-1}}
\def\wtc{\widetilde C}
\def\ta{\tilde A}
\def\s,{\quad $\,$}
\def\wt{\widetilde \cv}
\def\brr{\bar R}
\def\hrr{\widehat R}
\def\brs{\witi {\bar  R}}
\def\Ga{\Gamma}
\def\ga{\bar I}
\def\bi{\bar i}
\def\bp{\bar p}
\def\rta{\tilde r_A}
\def\kde{\witi K}
\font\pio=cmr10 scaled 1230
\def\gr{\mathcal Gr}
\def\qua{\quad$\,$}
\def\smad{\smallskip Proof.\ }
\def\sskip{\bigskip\bigskip\bigskip\bigskip\bigskip}
\def\s,{\quad $\,$}
\def\btt{\tilde R}
\def\witi{\widetilde}
\def\gai#1{{#1_{\bar I}}}
\def\wv{\widetilde V}
\def\disp{\displaystyle}
\def\rge{r_{(g,\eta,n)}}
\bigskip

 \begin{document}

\title{
 Fixed Points of anti-attracting maps 
 and Eigenforms on Fractals}
 \maketitle
 \author{Roberto Peirone \footnote{
  : e-mail: {\sf peirone@axp.mat.uniroma2.it}, Phone: +39\,0672594610,
    Fax: +39\,0672594699}},
  {Universit\`a di Roma-Tor Vergata,
   Dipartimento di Matematica,        
Via della Ricerca Scientifica, 00133 Roma, Italy}


\def\cd{\mathcal{D}}
\def\caa{ {\mathcal {A}}}
\def\wtcd{ \widetilde{\mathcal{D}}}
\def\wpcd{\widetilde{\mathcal{D}}'}
\font\picco=cmr10 scaled 850
\font\pio=cmr10 scaled 1230
\font\pioa=cmb10 scaled 1800
\def\veps{\varepsilon}
\def\ccg{\mathcal G}
\def\sskip{\bigskip\bigskip\bigskip\bigskip\bigskip}
\def\ccb{{\mathcal B}}
\def\td{\widetilde{\mathcal D}}
\def\cd{\mathcal D}
\def\cdni#1{\cd^{(#1)}_{\mathcal N}}
\def\cdn{\cd_{\mathcal N}}
\def\tdn{\td_{\mathcal N}}
 \def\cdi#1{\cd^{(#1)}}
\def\tqo{{\tq}^o}
\def\tqb{{\tq}^b}
\def\cqz{{\cq}^z}
\def\cq{ {\mathcal Q}}
\def\tq{\widetilde {\mathcal Q}}
\def\wq{\widetilde   q}
\def\ccn{{\mathcal N}}
\def\cv{{\mathcal U}}
\def\bte{{\bf \Theta}}
\def\vpi{\varpi}
\def\cc{\tilde c}
\def\cn{{\mathcal N}}
\def\cer#1{\mathrel{\smash{\mathop{#1} \limits^{\circ} }}}
\def\enpr{\quad \vrule height .9ex width .8ex depth -.1ex}
\def\rr{{\, \mathcal R\,}}
\def\bar{\overline}
\def\nont{ \rlap {\ /} \Theta }
\def\bre{\mathbb {R}}
\def\bna{\mathbb N}
\def\bnaso{\mathbb N\setminus\{0\}}
\def\bin{\Bbb Z}
\def\bra{\Bbb Q}
\def\nor#1{||#1||_{\infty}}
\def\nop#1{|||#1|||}
\def\ac{\`}
\def\ze{\zeta}
\def\llim{\mathop{\longrightarrow}}
\def\ssim{\mathop{\sim}}
\def\Osc{\mathop{\text{Osc}}}
\def\vo{V^{(0)}}
\def\von{V^{(1)}}
\def\vn#1{V^{(#1)}}
\def\rvo{\bre^{V^{(0)}}}
\def\rvon{\bre^{V^{(1)}}}
\def\lb{\lambda}
\def\Lb{\Lambda}
\def\Lhb{\widehat\Lambda}
\def\Lbs{\Lambda^*}
\def\Lbt{\widetilde{\Lambda^*}}
\def\Lbn{\Lambda^*_{\infty}}
\def\Lbnt{\widetilde{\Lambda^*}_{\infty}}
\def\Av{(No)}
\def\cl{{\mathcal L}}
\def\cf{{\mathcal F}}
\def\pch{(Ex)}
\def\ppp{\Pi}
\def\ppm{\Pi^{-1}}
\def\wtc{\widetilde C}
\def\ta{\tilde A}
\def\s,{\quad $\,$}
\def\wt{\widetilde \cv}
\def\brr{\bar R}
\def\hrr{\widehat R}
\def\brs{\witi {\bar  R}}
\def\Ga{\Gamma}
\def\ga{\bar I}
\def\bi{\bar i}
\def\bp{\bar p}
\def\rta{\tilde r_A}
\def\kde{\witi K}
\font\pio=cmr10 scaled 1230
\def\gr{\mathcal Gr}
\def\qua{\quad$\,$}
\def\smad{\smallskip Proof.\ }
\def\sskip{\bigskip\bigskip\bigskip\bigskip\bigskip}
\def\s,{\quad $\,$}
\def\btt{\tilde R}
\def\witi{\widetilde}
\def\gai#1{{#1_{\bar I}}}
\def\wv{\widetilde V}
\def\disp{\displaystyle}
\def\rge{r_{(g,\eta,n)}}
\def\innt{\text{\rm int}}
\bigskip


\begin{abstract}
An important problem in analysis on fractals is the
 existence   of a self-similar energy
on   finitely ramified fractals.  
The self-similar energies are constructed in terms
of eigenforms, that is, eigenvectors of a special
nonlinear operator. Previous results by C. Sabot
and V. Metz give conditions for the existence of
an eigenform. In this paper, I give a different and
probably  shorter proof
of the previous results, 
which appears to be suitable for improvements.
Such a proof is based on a fixed-point theorem for anti-attracting maps on a 
convex set. 
 \end{abstract}

\section{Introduction}
The subject of this paper is analysis on fractals.
Much of analysis on fractals 
is based  on an energy on them.
 Therefore, an important problem is
 the construction of   self-similar
  Dirichlet forms on fractals, i.e. energies.
In this paper, we investigate the finitely ramified fractals. 
This means more or less
 that the intersection of each pair of copies
of the fractal is a finite set. The Sierpinski Gasket and its generalizations, the 
Vicsek Set and the Lindstr\o m Snowflake are finitely 
ramified fractals, while the Sierpinski Carpet is not.
 The class of P.C.F. self-similar sets was introduced by Kigami in \cite{Kigp} 
 and a general theory with many examples can be found in \cite{Kigb}. 
 In this paper, we consider a subclass of
 the class  P.C.F. self-similar sets,
 with a very mild additional requirement, which is described in Section 2.
 This is the same setting as in other papers of mine
e.g.,  \cite{Pe1} and is essentially the same setting as in \cite{HHW},
and in other papers 
 (\cite{Str}, \cite{Sa}). 
We require that  every point in the initial set is a fixed point 
 of one of the contractions defining the fractal.
 Moreover, we require that the fractal is connected.

On such a class of fractals,
 the basic tool used to construct a Dirichlet form
 is a self-similar \textit{discrete}
 Dirichlet form defined on a special
 finite subset $V^{(0)}$ of the fractal. This subset is a sort 
 of boundary of the fractal. Such self-similar Dirichlet forms 
 are the \textit{eigenforms}, i.e., the eigenvectors of a special
  nonlinear operator $\Lambda_r$ called \textit{renormalization operator}, 
  which depends on a set of positive \textit{weights} $r_i$ placed on the cells of the fractal. 

In some specific cases (e.g., the Gasket) an explicit eigenform 
can be given.
The first result of existence of an eigenform
on a relatively general class of fractals was given 
by T. Lindstr\o m in \cite{Li}, where 
it is  proved that there exists
an eigenform with all weights equal to $1$ on the nested fractals,
a class of fractals with good properties of symmetry. 
 C. Sabot in \cite{Sa} proved a rather general criterion
 for the existence of an eigenform, and V. Metz in  \cite{Me}  
 improved the results in \cite{Sa}. 
 In fact, he removed an additional requirement
 present in the paper of Sabot and also,
 considered more general  classes of fractals
than those considered in \cite{Sa} and in the 
present paper.

 In this paper, I prove essentially the same
 existence result as that in \cite{Me}, but by a
 completely  different proof. Such a proof 
 on one hand is  in my opinion
  simpler than
 those of Sabot and of Metz, in that avoids almost completely the use
 of Hilbert's projective metric,
on the other it is based on a natural and general principle which
 could lead us to improve this type of result.
 This principle is that a  map from 
 the (non-empty) interior of a  compact and convex 
 $A$ set in $\bre^n$
 into itself has a   fixed point if it is {\it anti-attracting}. 
 This notion is discussed in Section 3.
 To illustrate it, 
 it is well-known that a map $\phi$ that 
 for every $\bar x\in\partial A$ sends a suiatble
   neighborhood of $\bar x$ toward the interior of $A$
   has a fixed point. Call such a map repulsing, in that
   every point of the boundary is repulsing.
We say that the map is anti-attracting if more generally
for every $\bar x\in\partial A$ it sends a suitable
   neighborhood of $\bar x$ in a direction which is not
   opposite to a given element chosen in the interior of $A$
  independent of $\bar x$.
 The previous, of course are informal definitions.
 For the precise definitions see Section 3.

\section{Notation}
In this Section, we  introduce the notation, based on that  of \cite{Pe1}.
This type of construction was firstly considered in \cite{HHW}. 
A notion similar to that of a fractal triple is discussed in \cite{Kigp}, 
Appendix A,
and called an \textit{ancestor}.  

First, we define the general  fractal setting. The
basic notion is that of   \textit{fractal triple}.
By this, we mean  a triple $\big(V^{(0)},V^{(1)},\Psi\big)$, 
where $V^{(0)}$ is a finite
set with $N\ge 2$ elements, $V^{(1)}$ is a finite set and
 $\Psi$ is a finite set of one-to-one maps from $V^{(0)}$ 
 into $V^{(1)}$ satisfying $V^{(1)}=\bigcup_{\psi\in \Psi} \psi\big(V^{(0)}\big)$. 
  Put 
$$V^{(0)}=\{P_1,...,P_N\}.$$
%
We require that
\begin{itemize}
\item[a)] for each $1\leq j\leq N$, there exists a (unique) function 
$\psi_j\in\Psi$ such that $\psi_j(P_j)=P_j$, and $\Psi=\{\psi_1,...,\psi_k\}$, with $k\geq N$;
\item[b)] $P_j\not\in\psi_i\big(V^{(0)}\big)$ when $i\neq j$ (in other words, 
if $\psi_i(P_h)=P_j$ with $i\in\{1,...,k\}$, $j,h\in\{1,...,N\}$, then $i=j=h$);
\item[c)] all pairs of points in $V^{(1)}$ can be connected by 
a path every edge of which is contained in a set of the form
$\psi_i(V^{(0)})$, in other words
for every $Q,Q'\in V^{(1)}$ there exists a sequence of points
$Q_0,...,Q_n\in\von$ such that $Q_0=Q$, $Q_n=Q'$ and for every $h=1,...,n$
there exists $i_h=1,...,k$ such that $Q_{h-1}, Q_h\in \psi_{i_h} (V^{(0)})$.
\end{itemize}

Note that $V^{(0)}\subseteq V^{(1)}$. 
As discussed in Introduction, $V^{(0)}$ is seen as a sort of boundary of
the fractal. 
By definition, a  {\it 1-cell} (or simply a {\it cell}) is a  set of the form 
 $V_i:=\psi_i\big(V^{(0)}\big)$
with $i=1,...,k$.
%
%
The points $P_j$, $j=1,...,N$ will be called {\it vertices},
Let
$$J=J\big(V^{(0)}\big)=\big\{\{j_1,j_2\}:j_1, j_2\in \{1,...,N\}, j_1\neq j_2\big\}.$$

Based on a fractal triple, we can construct in a standard way
a (unique) finitely ramified fractal, more precisely
a P.C.F. self-similar set.  See, for example,
\cite{Kigb}, Appendix A, for the details of such a construction.

Next, we define the Dirichlet forms on $\vo$,
invariant  with respect to an additive constant.
Namely,
denote by $\mathcal{D}\big(V^{(0)}\big)$, or simply $\mathcal{D}$, 
 the set 
of functionals $E$ from $\mathbb{R}^{V^{(0)}}$ into $\mathbb{R}$ of the form
$$E(u)=\sum_{\{j_1,j_2\}\in J} E_{\{j_1,j_2\}}\big(u(P_{j_1})-u(P_{j_2})\big)^2,$$
where $E_{\{j_1,j_2\}} \geq 0$. The numbers $E_{\{j_1,j_2\}}$ 
will be called \textit {coefficients} of $E$.
 Denote 
by $\widetilde{\mathcal{D}}\big(V^{(0)}\big)$, or simply $\widetilde{\mathcal{D}}$, 
the set of the irreducible Dirichlet forms, i.e. 
$$\widetilde{\mathcal{D}}=\{E\in \mathcal{D}:E(u)=0 
\text{ if and only if } u \text{ is constant}\}.$$
We remark that, in particular, if $E\in\cd$ and all coefficients of $E$ are strictly positive,
then $E\in \td$.
However, there are forms in $\td$ that have some coefficients equal to $0$.
More precisely, if $E\in\cd$, then $E\in\td$
 if and only if the graph on $\vo $ whose edges are the pairs $\{P_{j_1}, P_{j_2}\}$
 with $E_{\{j_1,j_2\}}>0$ is connected. This means that
 for every $P_{j}, P_{j'}\in\vo$ there exists
 a sequence $j_0=j,j_1,...,j_n=j'$ such that
 $E_{\{j_{h-1}, j_h\}}>0$ for every $h=1,...,n$.
 
Note that  a form $E\in\cd$ is uniquely determined by its coefficients.
Thus, we can identify $E\in\cd$ with the set of its coefficients
$E_{\{j_1,j_2\}}$ in $\bre^J$. In fact,
$$E_{\{j_1,j_2\}}=
{1\over 4}\Big( E\big(\chi_{\{P_{j_1}\}}-\chi_{\{P_{j_2\}}}\big )
- E\big(\chi_{\{P_{j_1\}}}+\chi_{\{P_{j_2}\}}\big) \Big).
$$
 Accordingly, we will equip $\cd$ with the euclidean metric in $\bre^J$. 
We will also  use the following convention:
$$E\le E' \iff E(u)\le E'(u)\quad \forall\, u\in\rvo,$$
$$E\preceq E'\iff E_d\le E'_d\quad \forall d\in J.$$
Note that $E\preceq E'$ implies $E\le E'$ but the converse does
not hold. 
The following lemma is standard. I merely sketch the proof.

\begin{lemma}\label{2.1}
If $E_1,E_2\in\td$ there exist positive constant $c, c'$ such that
$c E_1\le E_2\le c' E_1$.
\end{lemma}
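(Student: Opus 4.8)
The plan is to reduce the statement to the equivalence of norms on a finite-dimensional vector space, the only subtlety being where irreducibility is used. First I would pass to the quotient of $\rvo$ by the constant functions. Concretely, set $W=\{u\in\rvo:\sum_{j=1}^{N}u(P_j)=0\}$, a linear complement of the one-dimensional space of constants. Since every form in $\cd$ depends only on the differences $u(P_{j_1})-u(P_{j_2})$, we have $E(u+c)=E(u)$ for every $E\in\cd$ and every constant $c$; hence it suffices to prove $cE_1(u)\le E_2(u)\le c'E_1(u)$ for $u\in W$, because a general $u\in\rvo$ differs from its projection onto $W$ by a constant and both sides of each inequality are unchanged under adding that constant.

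Next I would check that for $i=1,2$ the restriction $E_i|_W$ is a positive-definite quadratic form. By definition $E_i$ is a nonnegative quadratic form on $\rvo$, so $E_i|_W$ is nonnegative; and if $E_i(u)=0$ for some $u\in W$, then, since $E_i\in\td$, $u$ must be constant, whence $u=0$ because $u\in W$. Therefore $E_1|_W$ is (the square of) a genuine Euclidean norm on the finite-dimensional space $W$, and the set $S=\{u\in W:E_1(u)=1\}$ is a nonempty compact subset of $W$.

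Then I would let $c=\min_{u\in S}E_2(u)$ and $c'=\max_{u\in S}E_2(u)$, which exist by continuity of $E_2$ and compactness of $S$, and are strictly positive because $E_2|_W$ is positive definite and $0\notin S$. Using that $E_1$ and $E_2$ are homogeneous of degree $2$, for any $u\in W\setminus\{0\}$ the vector $u/\sqrt{E_1(u)}$ lies in $S$, so $c\le E_2(u)/E_1(u)\le c'$, i.e. $cE_1(u)\le E_2(u)\le c'E_1(u)$; this holds trivially for $u=0$ as well, and by the first paragraph it extends to all of $\rvo$, which by the paper's convention is exactly $cE_1\le E_2\le c'E_1$.

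I expect the only genuinely substantive step to be the positive-definiteness of $E_i|_W$: this is precisely where the hypothesis $E_1,E_2\in\td$ (rather than merely $E_1,E_2\in\cd$) is needed, and it is what makes $S$ compact. Everything else is the classical argument that any two norms on a finite-dimensional vector space are equivalent, so the write-up should only take a few lines.
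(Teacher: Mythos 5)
Your proof is correct and follows essentially the same route as the paper: reduce modulo constants, use irreducibility to get positive definiteness (so the normalized set is compact and the extrema are positive), and conclude by homogeneity. The only cosmetic difference is your choice of normalization (the $E_1$-unit sphere in the zero-mean hyperplane) versus the paper's Euclidean sphere in $\{u(P_1)=0\}$ with the ratio $E_2/E_1$.
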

\begin{proof}
Let $S:=\big\{u\in\rvo: u(P_1)=0, ||u||=1\big\}$.
The ratio ${E_2\over E_1}$ attains its minimum $c$ and its maximum $c'$ over $S$.
Thus, for every non-constant $u\in\rvo$, putting
$\disp{\witi u:={u-u(P_1)\over ||u-u(P_1)||}}$, we have $\witi u\in S$, thus
$${E_2(u)\over E_1(u)} = {E_2(\witi u)
 \over E_1(\witi u)}\in [c,c'].$$
 \end{proof}

Next, we recall the definition of the \textit{renormalization operator} $\Lambda_r$. 
For every $r\in W:=]0,+\infty[^k$, every $E\in \widetilde{\mathcal{D}}$ and
every $v\in \mathbb{R}^{V^{(1)}}$, define
$$S_{1,r}(E)(v)=\sum_{i=1}^{k}r_i \, E(v\circ \psi_i).
$$
Here, an  element $r$ of $W$ can be written as $(r_1,...,r_k)$
and the number $r_i>0$ is called the {\it weight} placed on the cell
$V_i$. 
Note that $S_{1,r}(E)$ is a sort of sum of $E$ on all cells. It is easy to see
that $S_{1,r}(E)$ is a Dirichlet form on $\bre^{\von}$.
Now, for $u\in \mathbb{R}^{V^{(0)}}$ let
$$\mathcal{L}(u)=\left\{v\in \mathbb{R}^{V^{(1)}}:v=u \text{ on } V^{(0)}\right\},$$
and let us define $\Lambda_r(E)(u)$ for $u\in\bre^{\vo}$ as  
$$\Lambda_r(E)(u)=\inf
\left\{S_{1,r}(E)(v):v\in \mathcal{L}(u)\right\}.$$
The form $\Lambda_r(E)$ is called the \textit{restriction} of
$S_{1,r}(E)$ on $\vo$.
Note that $\Lambda_r$ maps  $\widetilde{\mathcal{D}}$ into itself.
For details see Lemma 2.3.5 in \cite{Kigb}.

 If $r\in W$, we say that $E\in \widetilde{\mathcal{D}}$ is an \textit{$r$--eigenform}
 (with eigenvalue $\rho$)
 if there exists $\rho>0$  such that
$$\Lambda_r(E)=\rho \, E,\eqno(2.1).$$
We say that $E$ is an $r$-degenerate eigenform 
 (with eigenvalue $\rho$) if $E\in\td\setminus\cd$
satisfies $(2.1)$.

\begin{lemma}\label{2.2}
The map 
$(r,E)\mapsto \Lb_r(E)$ from $W\times \cd$ to $\cd$ is continuous.
\end{lemma}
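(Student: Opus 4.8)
The plan is to reduce the statement to the continuity, for each fixed $u\in\rvo$, of the scalar function $(r,E)\mapsto\Lambda_r(E)(u)$. This reduction is legitimate: $\cd$ carries the Euclidean metric of $\bre^J$, and by the explicit formula for the coefficients recalled above every coordinate of a form $F\in\cd$ is an affine function of the finitely many numbers $F\big(\chi_{\{P_{j_1}\}}\pm\chi_{\{P_{j_2}\}}\big)$; hence continuity of $(r,E)\mapsto\Lambda_r(E)(u)$ for those particular $u$ already gives continuity of $(r,E)\mapsto\Lambda_r(E)$ as a map into $\bre^J$, that is, into $\cd$.

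The decisive step will be to rewrite the infimum defining $\Lambda_r(E)(u)$, which a priori runs over the unbounded affine space $\cl(u)$, as a minimum over a compact set that is independent of $(r,E)$. Fix $u$, set $m=\min_{\vo}u$, $M=\max_{\vo}u$, and
$$K_u=\big\{v\in\bre^{\von}:v=u\text{ on }\vo\text{ and }m\le v(P)\le M\text{ for every }P\in\von\big\},$$
a nonempty, compact, convex set depending on $u$ only. Given $v\in\cl(u)$, let $\widetilde v$ be obtained by truncating each value of $v$ to the interval $[m,M]$; then $\widetilde v\in K_u$, and since $t\mapsto\min(\max(t,m),M)$ is $1$--Lipschitz, each squared increment appearing in
$$S_{1,r}(E)(v)=\sum_{i=1}^{k}\ \sum_{\{j_1,j_2\}\in J}r_i\,E_{\{j_1,j_2\}}\big(v(\psi_i(P_{j_1}))-v(\psi_i(P_{j_2}))\big)^2$$
can only decrease under truncation. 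As all coefficients $r_i\,E_{\{j_1,j_2\}}$ are nonnegative --- for every $E\in\cd$, not merely for $E\in\td$ --- this yields $S_{1,r}(E)(\widetilde v)\le S_{1,r}(E)(v)$, whence $\Lambda_r(E)(u)=\min\{S_{1,r}(E)(v):v\in K_u\}$.

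It then suffices to note that $(r,E,v)\mapsto S_{1,r}(E)(v)$ is, by the displayed formula, a polynomial in the real variables $r_i$, the coefficients of $E$, and the components of $v$, hence jointly continuous; being uniformly continuous on $C\times K_u$ for every compact $C\subseteq W\times\cd$, its minimum over the fixed compact set $K_u$ depends continuously on $(r,E)$ --- the elementary instance of Berge's maximum theorem with a constant correspondence. This establishes the continuity of $(r,E)\mapsto\Lambda_r(E)(u)$ and hence the lemma. The only point that is not purely routine is the truncation reduction of the previous paragraph: one must observe that $S_{1,r}(E)$ is, for \emph{every} $E\in\cd$, a sum of squared increments with nonnegative weights and so is lowered by cutting the competitor down to the range of its boundary values; this is exactly what confines the minimization to a parameter-independent compact set and prevents the bare infimum over $\cl(u)$ from jumping near the reducible forms, where the set of minimizers of $S_{1,r}(E)$ on $\cl(u)$ is unbounded.
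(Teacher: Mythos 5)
Your argument is correct. Note that the paper states Lemma \ref{2.2} without any proof (it is treated as a standard fact, with the construction of $\Lb_r$ referred to \cite{Kigb}), so there is nothing to compare it against line by line; what you supply is a legitimate self-contained justification. The two ingredients are sound: (i) since $\Lb_r(E)\in\cd$ (this well-definedness is presupposed by the statement of the lemma), the polarization identity recalled in Section 2 expresses each coefficient $\big(\Lb_r(E)\big)_{\{j_1,j_2\}}$ as a fixed linear combination of the values $\Lb_r(E)\big(\chi_{\{P_{j_1}\}}\pm\chi_{\{P_{j_2}\}}\big)$, so pointwise continuity of $(r,E)\mapsto \Lb_r(E)(u)$ for finitely many $u$ suffices for continuity into $\bre^J$; and (ii) the truncation of a competitor $v\in\cl(u)$ to the range $[\min_{\vo}u,\max_{\vo}u]$ leaves the boundary values untouched, decreases every squared increment because the cut-off is $1$-Lipschitz, and the weights $r_iE_{\{j_1,j_2\}}$ are nonnegative for every $E\in\cd$, so the infimum over the unbounded affine set $\cl(u)$ equals the minimum over the compact, parameter-independent set $K_u$; joint continuity of the polynomial $(r,E,v)\mapsto S_{1,r}(E)(v)$ then gives continuity of the minimum by the usual compactness argument. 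You also correctly identify why some such device is needed: for reducible $E$ the minimizing set in $\cl(u)$ is unbounded and a naive passage to the limit in the bare infimum could fail, so confining the minimization to $K_u$ is exactly the point that makes continuity hold on all of $W\times\cd$ and not merely on $W\times\td$.
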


The aim of this paper will be to give sufficient conditions for the existence of
an $r$-eigenform.

\section{The fixed point Theorems.}

In this Section, we give two fixed point Theorems, useful for the following.
They are simple variant of the Brouwer fixed point Theorem.
The first concerns maps from a convex and compact set 
{\it not necessarily into itself}
but such that any point  $x$ of the boundary is mapped not 
on the half-line with  end-point at $x$ and opposite to
a given interior point. The second theorem is a
 variant of the first but for {\it open} convex sets.
 First, recall some notation. An affine subset of $\bre^n$ is a set in $\bre^n$ of the form 
 $X+a$ where $X$ is a linear subspace of $\bre^n$ and $a\in\bre^n$.
 Now, let $Z$ be an affine set in $\bre^n$, and let $v,w\in Z$.
 Let
 $$
 ]v,w[:=\{ v+t(w-v): t\in ]0,1[\}, $$
 $$[v,w[:=]v,w[\cup \{v\},\quad  ]v,w]:=]v,w[\cup \{w\},\quad 
 [v,w]:=]v,w[\cup \{v,w\}.
 $$
In the following, if $Z$ is an affine subset
of $\bre^n$, every topological notion on $Z$ will be meant to
be with respect to the topology on $Z$ inherited by the euclidean topology
on $\bre^n$. For example, if
$A\subseteq Z$, we will denote by $\innt(A)$ the interior
of $A$ with respect to such a topology.
 The following lemma is standard and can be easily proved.

 \begin{lemma}\label{3.0}
Suppose $Z$ is an affine subset of $\bre^n$ and $A$ 
is a convex subset of $Z$.Then

i) If  $v\in A$ and $w\in \innt(A)$, then $]v,w[ \, \subseteq \innt(A)$.

ii) $\innt(A)$ is convex.

 \end{lemma}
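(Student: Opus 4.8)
The plan is to prove (i) directly from the definition of the interior relative to $Z$, and then obtain (ii) as an immediate corollary of (i). Throughout I will use that, by hypothesis, a point lies in $\innt(A)$ iff some ball of $\bre^n$ around it, intersected with $Z$, is contained in $A$; and I will write $Z=X+a$ with $X$ a linear subspace of $\bre^n$, so that differences of points of $Z$ lie in $X$ and $z+X=Z$ for every $z\in Z$.

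For (i): fix $v\in A$, $w\in\innt(A)$ and $t\in{]0,1[}$, and set $p:=v+t(w-v)=(1-t)v+tw$. Choose $\delta>0$ with $B(w,\delta)\cap Z\subseteq A$. I would then show $B(p,t\delta)\cap Z\subseteq A$, which yields $p\in\innt(A)$ and hence ${]v,w[}\subseteq\innt(A)$. To this end, take $q\in B(p,t\delta)\cap Z$ and put $q':=w+\tfrac1t(q-p)$. Since $q,p\in Z$ we have $q-p\in X$, hence $q'\in w+X=Z$; and $\|q'-w\|=\tfrac1t\|q-p\|<\delta$, so $q'\in B(w,\delta)\cap Z\subseteq A$. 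A one-line computation gives $(1-t)v+tq'=q$, so $q\in A$ by convexity of $A$. This proves the inclusion and therefore (i).

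For (ii): given $v,w\in\innt(A)$, apply (i) with this $v$ (which lies in $A$) and this $w\in\innt(A)$ to get ${]v,w[}\subseteq\innt(A)$; combining with $v,w\in\innt(A)$ gives $[v,w]\subseteq\innt(A)$, i.e. $\innt(A)$ is convex. I do not expect any real obstacle here; the only point needing care is that all topological notions are taken relative to $Z$, so one must verify that the auxiliary point $q'$ built in the proof of (i) indeed belongs to $Z$ — which is precisely where the affine structure $Z=X+a$ is used.
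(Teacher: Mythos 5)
Your proof is correct: the computation $q=(1-t)v+tq'$ with $q'\in B(w,\delta)\cap Z\subseteq A$ is exactly the standard argument, and you rightly take care that $q'\in Z$ via the affine structure $Z=X+a$, which is the only delicate point since all interiors are relative to $Z$. The paper itself offers no proof (it declares the lemma standard and easily proved), so your argument simply supplies, correctly, the expected classical line-segment/relative-interior proof, with (ii) following from (i) as an immediate corollary.
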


Let  $\witi x$ be a point of the affine subset $Z$ of $\bre^n$. Then, we define
$$
Ext_{\witi x}(x)=\big\{\witi x+t(x-\witi x): t>1\big\},
$$
and for short we will write $Ext(x)$ instead of $Ext_{\witi x}(x)$ 
when $\witi x$ is clear from the context.

\begin{lemma}\label{3.00}
Let $K$ be a compact convex subset of the affine subset $Z$ of $\bre^n$,
 and let $\witi x\in\innt (K)$. Then

i) For every $x\in Z \setminus \{ \witi x\}$ there exists a unique
$y=p(x)\in\partial K$  of the form $y=\witi x+t(x-\witi x)$, $t>0$.

ii) The map $p: Z \setminus \{\witi x\}\to\partial K$ is continuous.
\smallskip

iii) 
If $x\in\partial K$, then $p(x)=x$;

if $x\in K\setminus\{\witi x\}$, then $x\in[p(x), \witi x]$;

if $x\in Z\setminus \innt (K)$, then $p(x)\in [x,\witi x]$.

\smallskip
iv) If $x\in K \setminus\{\witi x\}$ and $x_1  \in Ext(x)\cap K$,  then 
$x_1\in [x, p(x)]$.

v) If $x\in Z\setminus K$, then
$y=p(x)$ is the unique point in $\partial K$ satisfying
$x\in Ext(y)\cup\{y\}$.
\end{lemma}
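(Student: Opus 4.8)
The plan is to deduce all the assertions from the convexity facts in Lemma \ref{3.0} together with the compactness (hence boundedness) of $K$ and the assumption $\witi x\in\innt(K)$. After a translation I may and do assume $\witi x=0$, so that $Z$ is a linear subspace, $Ext(x)=\{tx:t>1\}$, and for $x\in Z\setminus\{0\}$ the half-line through $x$ is $\{tx:t>0\}$, a point $tx$ on it being described by the single scalar $t>0$. To prove (i) I would set $t(x):=\sup\{t\ge 0:tx\in K\}$: since some ball $B(0,\delta)$ is contained in $K$ we get $t(x)>0$; since $K$ is bounded, $t(x)<+\infty$; since $K$ is closed, $p(x):=t(x)\,x\in K$; and if $p(x)$ were in $\innt(K)$ a neighbourhood of it would lie in $K$, giving $tx\in K$ for some $t>t(x)$ and contradicting the definition of $t(x)$, so $p(x)\in\partial K$. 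For uniqueness, if $y_1=t_1x$ and $y_2=t_2x$ both lie in $\partial K$ with $0<t_1<t_2$, then $y_1=(t_1/t_2)y_2\in\,]0,y_2[$, which is contained in $\innt(K)$ by Lemma \ref{3.0}(i) (applied with $v=y_2\in K$ and $w=0\in\innt(K)$), a contradiction.

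For (ii) I would argue by subsequences. Given $x_n\to x$ in $Z\setminus\{0\}$, write $p(x_n)=t_nx_n$. Boundedness of $K$ together with $|x_n|\to|x|>0$ makes the numbers $t_n=|p(x_n)|/|x_n|$ bounded above, while $|t_nx_n|=|p(x_n)|\ge\delta$ (because $p(x_n)\notin B(0,\delta)\subseteq\innt(K)$) makes them bounded below by a positive constant. Hence every subsequence of $(t_n)$ has a further subsequence with $t_{n_k}\to t_\infty>0$, so $p(x_{n_k})\to t_\infty x$, which lies in $\partial K$ since the boundary is closed, and equals $p(x)$ by the uniqueness in (i); therefore $p(x_n)\to p(x)$. (Alternatively one may use the Minkowski gauge $\mu_K$ of $K$: it is finite and convex on $Z$, hence continuous, one has $\partial K=\{\mu_K=1\}$ because $0\in\innt(K)$ and $K$ is compact, and $p(x)=x/\mu_K(x)$.)

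Parts (iii)--(v) are then bookkeeping on the scalar $t$; write $p(x)=t_px$ with $t_p>0$. If $x\in\partial K$ then $x=1\cdot x$ lies on the half-line, so $p(x)=x$ by the uniqueness in (i). If $x\in K\setminus\{0\}$ and $t_p<1$, then $p(x)\in\,]0,x[\,\subseteq\innt(K)$ by Lemma \ref{3.0}(i) (using $x\in K$), contradicting $p(x)\in\partial K$; hence $t_p\ge 1$ and $x=(1/t_p)p(x)\in[p(x),0]$. If $x\in Z\setminus\innt(K)$ and $t_p>1$, then $x=(1/t_p)p(x)\in\,]0,p(x)[\,\subseteq\innt(K)$, a contradiction; hence $t_p\le 1$ and $p(x)=t_px\in[x,0]$. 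For (iv), $x_1\in Ext(x)$ means $x_1=t_1x$ with $t_1>1$, and $x\in K$ gives $t_p\ge 1$ by (iii); if one had $t_1>t_p$, then $p(x)=(t_p/t_1)x_1\in\,]0,x_1[\,\subseteq\innt(K)$ by Lemma \ref{3.0}(i) (using $x_1\in K$), contradicting $p(x)\in\partial K$, so $1<t_1\le t_p$, which places $x_1$ on the segment from $x$ (parameter $1$) to $p(x)$ (parameter $t_p$), i.e. $x_1\in[x,p(x)]$. For (v), $x\notin K$ forces $t_p\le 1$ by (iii), and $t_p=1$ would give $p(x)=x\in K$; so $0<t_p<1$ and $x=(1/t_p)p(x)$ with $1/t_p>1$, i.e. $x\in Ext(p(x))$; and if $y'\in\partial K$ satisfies $x\in Ext(y')\cup\{y'\}$, then $x\ne y'$ (as $x\notin K$), hence $x=t'y'$ with $t'>1$, so $y'=(1/t')x$ is a point of $\partial K$ on the half-line $\{tx:t>0\}$ and $y'=p(x)$ by (i). The only step requiring genuine care is the continuity in (ii); I expect the subsequence argument (equivalently, continuity of the Minkowski gauge) to be the main---though still routine---obstacle, everything else following at once from Lemma \ref{3.0} once (i) is in hand.
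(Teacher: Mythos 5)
Your proof is correct and follows essentially the same route as the paper's sketch: parametrize the ray from $\witi x$, get uniqueness of the boundary point from Lemma \ref{3.0}, and reduce (iii)--(v) to bookkeeping on the scalar parameter. The only real difference is that you spell out details the paper leaves implicit --- existence via the supremum of $\{t:tx\in K\}$ rather than a connectedness argument, and an explicit subsequence/compactness (or Minkowski gauge) proof of the continuity in (ii), which the paper simply asserts to follow from uniqueness.
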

\begin{proof} (Sketch)
i) Clearly,
$H:=\partial K\cap\{\witi x+t(x-\witi x):t>0\}$
is non-empty by connectedness.
In fact, the point  $\witi x+t(x-\witi x)$ belongs to
$\innt (K)$ for $t=0$, and,
in view of the boundness of $K$ lies in $Z\setminus K$ for
sufficiently large $t$. 
Moreover, $H$ cannot contain two different points by
Lemma \ref{3.0}. ii) The continuity of $p$ follows at once from
the uniqueness of the point defining $p(y)$.
iii) It is easy to see that in the definition
of $p$ we have $t=1$ if $x\in\partial K$,
$t\ge 1$ if $x\in K\setminus\{\witi x\}$,
$t\le 1$ if $x\in Z\setminus \innt(K)$.
iv)  and v) are trivial.
\end{proof}

\begin{teorema}\label{3.1}
Let $Z$ be an affine
subset of $\bre^n$, let $K$ be a compact convex subset of $Z$,
 and let $\witi x$ be an interior 
point of $K$. Let $\phi:K\to Z$ be a continuous map such that
for every $x\in \partial K$ $\phi(x)\notin Ext(x)$.
  Then $\phi$ has  a fixed point
on $K$.
\end{teorema}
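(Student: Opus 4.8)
The plan is to reduce the statement to the Brouwer fixed point theorem by post-composing $\phi$ with the radial retraction of $Z$ onto $K$ centred at $\witi x$. Concretely, define $q\colon Z\to K$ by $q(x)=x$ for $x\in K$ and $q(x)=p(x)$ for $x\in Z\setminus K$, where $p$ is the map furnished by Lemma \ref{3.00}. Since $\witi x\in\innt(K)\subseteq K$, this is well defined on all of $Z$, and by Lemma \ref{3.00}(iii) its values indeed lie in $K$ (if $x\notin\innt(K)$ then $p(x)\in[x,\witi x]\subseteq K$). First I would verify that $q$ is continuous: the closed sets $K$ and $Z\setminus\innt(K)$ cover $Z$; on the first $q$ is the identity, on the second $q$ coincides with $p$, which is continuous by Lemma \ref{3.00}(ii); and the two prescriptions agree on the overlap $\partial K$ because $p(x)=x$ there by Lemma \ref{3.00}(iii). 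Hence continuity of $q$ follows from the pasting lemma.

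Next, $\psi:=q\circ\phi\colon K\to K$ is a continuous self-map of the non-empty compact convex subset $K$ of the finite-dimensional affine space $Z$, so Brouwer's theorem (applied to $K$, which has non-empty interior in $Z$ since $\witi x\in\innt(K)$) yields a point $x_0\in K$ with $q(\phi(x_0))=x_0$.

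Finally I would show $\phi(x_0)=x_0$ by a dichotomy. If $\phi(x_0)\in K$, then $q(\phi(x_0))=\phi(x_0)$, so $\phi(x_0)=x_0$ and we are done. If instead $\phi(x_0)\notin K$, then $x_0=q(\phi(x_0))=p(\phi(x_0))\in\partial K$; in particular $\phi(x_0)\ne x_0$, because $\phi(x_0)\notin K$ while $x_0\in\partial K\subseteq K$. Applying Lemma \ref{3.00}(v) to the point $\phi(x_0)\in Z\setminus K$, whose associated boundary point is $p(\phi(x_0))=x_0$, we get $\phi(x_0)\in Ext(x_0)\cup\{x_0\}$, and since $\phi(x_0)\ne x_0$ this forces $\phi(x_0)\in Ext(x_0)$. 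But $x_0\in\partial K$, so the hypothesis on $\phi$ gives $\phi(x_0)\notin Ext(x_0)$ — a contradiction. Therefore the second case cannot occur, and $x_0$ is the desired fixed point.

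I do not anticipate a serious obstacle: the only genuinely delicate points are the continuity of the retraction $q$ (handled by the pasting lemma together with Lemma \ref{3.00}) and the correct bookkeeping of the $Ext$ condition in the boundary case, which is precisely what the hypothesis $\phi(x)\notin Ext(x)$ on $\partial K$ is designed to exclude.
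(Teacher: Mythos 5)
Your proposal is correct and follows essentially the same route as the paper: the retraction $q$ is exactly the paper's $\witi p$, you likewise apply Brouwer to $q\circ\phi$, and the contradiction via Lemma \ref{3.00}(v) is the same (your dichotomy on whether $\phi(x_0)\in K$ is just a cosmetic reshuffling of the paper's case split on $x_0\in\partial K$ versus $x_0\in\innt(K)$). The extra care you take with the pasting lemma for the continuity of $q$ is a sound filling-in of a detail the paper states without proof.
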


\begin{proof}
Let $\witi p(y)=\begin{cases}
p(y) & $if$\ \ y\in Z\setminus K\\
y  & $if \ $y\in K
\end{cases}$. 
Since $p= Id$ on $\partial K$, then $\witi p$ is continuous on all of $Z$
 with values in $K$ and amounts to $Id$ on $K$.
Let $\witi\phi=\witi p\circ\phi$. Since $\witi \phi$ is continuous from $K$ into itself,
 it has a fixed point $\bar x$. We claim that $\phi(\bar x)=\bar x$.
In fact, if $\bar x\in \partial K$ and 
 $\phi(\bar x)\ne \bar x=\witi p\big( \phi(\bar x)\big)$,  by the definition
of $\witi p$  we have $\bar x= p\big( \phi(\bar x)\big)$
and $\phi(\bar x)\in Z \setminus K$. Thus,
by  Lemma \ref{3.00} v)
 we have $\phi(\bar x)\in Ext(\bar x)$, contrary to our assumption.
If $\bar x\in int (K)$, then $\bar x=\witi p\big( \phi(\bar x)\big)$. Thus,
since $\witi p$ sends $Z\setminus K$ into $\partial K$, we have
$\phi(\bar x)\in  K$, and therefore $\bar x=\witi p\big( \phi(\bar x)\big)=\phi(\bar x)$.
\end{proof}

Let $Z$ and $K$ be as in Theorem \ref{3.1}.
Let now $\phi$ be a  continuous map
from $\innt(K)$ into itself. We say that 
$\bar x\in\partial K$ is {\it anti-attracting
for $\phi$}
if there exists a neighborhood $U_{\bar x}$ of $\bar x$ 
in $Z$ such
that for every $x\in U_{\bar x}\cap \innt (K)$ we have
$\phi(x)\notin Ext(x)$. We say that $\phi$ is anti-attracting if
every pointy of $\partial K$ is anti-attracting for $\phi$.

\begin{teorema}\label{3.2}
Let  $Z$, $K$ and $\witi x$ be as  Theorem \ref{3.1}. Let $\phi$ be an
anti-attracting
map from $\innt(K)$ into itself. Then $\phi$ has a fixed 
point on $\innt (K)$.
\end{teorema}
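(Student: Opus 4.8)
The plan is to squeeze $K$ from inside by a family of compact convex sets $K_n$ that fill up $\innt(K)$ and whose boundaries close in on $\partial K$, and then to invoke Theorem \ref{3.1} for $\phi$ restricted to a single such $K_n$. Concretely, for $n\in\bnaso$ set $K_n:=\witi x+(1-\tfrac1n)(K-\witi x)$. As the image of $K$ under the invertible affine map $L_n(z)=(1-\tfrac1n)z+\tfrac1n\witi x$ (which fixes $\witi x$), each $K_n$ is compact and convex, and, since $L_n$ is a homeomorphism of $Z$, it carries $\innt(K)$ onto $\innt(K_n)$ and $\partial K$ onto $\partial K_n$; in particular $\witi x=L_n(\witi x)\in\innt(K_n)$. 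For $n\ge 2$ every point of $K_n$ has the form $(1-\tfrac1n)z+\tfrac1n\witi x$ with $z\in K$, hence lies in $[z,\witi x[\,\subseteq\innt(K)$ by Lemma \ref{3.0} i); so $K_n\subseteq\innt(K)$. Finally, for $y=L_n(w)\in\partial K_n$ with $w\in\partial K$ we have $|y-w|=\tfrac1n|w-\witi x|$, so $\sup_{y\in\partial K_n}\dist(y,\partial K)\le\tfrac1n\sup_{z\in K}|z-\witi x|\to 0$ (the supremum is finite since $K$ is compact). We may assume $\partial K\ne\emptyset$, since otherwise $Z$ is a single point and the claim is trivial.

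Next I would turn the pointwise anti-attracting hypothesis into a uniform statement on a collar of $\partial K$. For each $\bar x\in\partial K$ take the neighbourhood $U_{\bar x}$ from the definition; by compactness of $\partial K$ finitely many, $U_{\bar x_1},\dots,U_{\bar x_m}$, cover $\partial K$, and $U:=\bigcup_{i}U_{\bar x_i}$ is an open subset of $Z$ containing the compact set $\partial K$. By the elementary tube-type estimate there is $\delta>0$ with $\{z\in Z:\dist(z,\partial K)<\delta\}\subseteq U$. Now fix $n$ so large that $\tfrac1n\sup_{z\in K}|z-\witi x|<\delta$. Then for every $y\in\partial K_n$ we have $\dist(y,\partial K)<\delta$, hence $y\in U_{\bar x_i}$ for some $i$; since moreover $y\in\partial K_n\subseteq K_n\subseteq\innt(K)$, the defining property of $U_{\bar x_i}$ yields $\phi(y)\notin Ext(y)$, where $Ext=Ext_{\witi x}$.

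It remains only to apply Theorem \ref{3.1} with the compact convex set $K_n$, the same interior point $\witi x$, and the continuous map $\phi|_{K_n}\colon K_n\to\innt(K)\subseteq Z$ (the operator $Ext$ is literally the same as in Theorem \ref{3.1} because the chosen interior point is the same). Its hypothesis is exactly the condition $\phi(y)\notin Ext(y)$ for all $y\in\partial K_n$ just verified, so $\phi|_{K_n}$ has a fixed point $\bar z\in K_n$; as $K_n\subseteq\innt(K)$, this $\bar z$ lies in $\innt(K)$ and satisfies $\phi(\bar z)=\bar z$, which is the assertion.

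The main obstacle is precisely the second step: passing from the $\bar x$-dependent neighbourhoods $U_{\bar x}$ in the definition of anti-attracting to a genuine neighbourhood (a collar $\{\dist(\cdot,\partial K)<\delta\}$) of all of $\partial K$ on which the non-extension property holds simultaneously. This is what compactness of $\partial K$ buys us, and it is also what makes the argument clean — once such a $\delta$ is in hand, no limiting procedure in $n$ is needed, since a single sufficiently small $K_n$ already produces the fixed point inside $\innt(K)$.
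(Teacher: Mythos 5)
Your proof is correct, and its skeleton coincides with the paper's: cover the compact set $\partial K$ by finitely many of the neighbourhoods $U_{\bar x}$ from the definition, manufacture a compact convex set sitting inside $\innt(K)$ whose boundary lies entirely in the region where $\phi(x)\notin Ext(x)$, and feed it to Theorem \ref{3.1} (as you note, $Ext=Ext_{\witi x}$ depends only on $\witi x$, not on the convex body, so the hypothesis transfers verbatim). Where you differ is in how that inner set is produced: the paper takes $\witi K:=co(K\setminus U)$ with $U=\bigcup_i U_{\bar x_i}$, which forces it first to strengthen the neighbourhoods so that their \emph{closures} retain the non-extension property (its condition (3.1)) and then to verify $\partial\witi K\subseteq \bar U$ and $\witi x\in\innt(\witi K)$; you instead use the homothetic copies $K_n=\witi x+(1-\tfrac1n)(K-\witi x)$ together with the standard positive-distance (collar) lemma giving $\delta>0$ with $\{z\in Z:\dist(z,\partial K)<\delta\}\subseteq U$, plus the explicit bound $\dist(y,\partial K)\le\tfrac1n\sup_{z\in K}|z-\witi x|$ for $y\in\partial K_n$. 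Your route avoids the convex-hull boundary argument and the closure refinement entirely, at the price of the (equally elementary) collar estimate, and it makes the compact convexity of the inner set and $\witi x\in\innt(K_n)$ immediate, since $L_n$ is an affine homeomorphism of $Z$. Two small touch-ups: the inclusion you invoke, $[z,\witi x[\,\subseteq\innt(K)$, is false when $z\in\partial K$ (the left endpoint is included in that bracket); what you need, and what Lemma \ref{3.0} i) actually gives, is that your point lies in $]z,\witi x[\,\cup\{\witi x\}\subseteq\innt(K)$, which holds because the parameter is $t=\tfrac1n\in\,]0,1[$ for $n\ge 2$. Also state explicitly that each $U_{\bar x}$ may be taken open in $Z$ (replace it by its interior), which is what makes $U$ open and the collar lemma applicable; with these trivial fixes the argument is complete.
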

\begin{proof}
For every $\bar x\in\partial K$,  let $U_{\bar x}$ be a  neighborhood of 
$\bar x$ in $Z$ as in the definition of
an anti-attracting point.
We can and do assume that, 
$U_{\bar x}$ is open and moreover its closure has the same property,
namely
$$\Big(x\in \bar{ U_{\bar x}}\cap \innt (K)\Big)\Rightarrow
\Big(x\ne\witi x, \phi(x)\notin  Ext(x)\Big) \eqno(3.1)
$$
By compactness, there exist $\bar x_1,...,\bar x_m\in\partial K$ such that
$$U:=\bigcup\limits_{i=1}^m U_{\bar x_i}\supseteq \partial K.$$
Let
$\witi K:=co\big(  K\setminus U\big) $.
Note that, in view of Lemma \ref{3.0} ii), we have
 $$\witi K\subseteq  \innt (K).\eqno (3.2)$$
We also have
$$
\partial {\witi K}\subseteq \bar U. \eqno (3.3)$$
In fact, in the opposite case,
there exists $x\in \partial {\witi K}$,  such that 
$$x\in 
\innt(K)\setminus \bar U\subseteq K\setminus U\subseteq \witi K,$$
and since $\innt(K)\setminus \bar U$ is open in $Z$,
then $x\notin \partial{\witi  K}$, a  contradiction,
thus (3.3) holds. By (3.2) and (3.3),
for every $x\in\partial {\witi  K}$, we have 
$x\in \bar{ U_{\bar x_i}}\cap \innt (K)$
for some $i=1,...,m$, thus by (3.1) $\phi(x)\notin  Ext(x)$.
Moreover, $\witi x\notin \bar U$ by (3.1). Therefore,
$\witi x\in K\setminus U\subseteq \witi K$, but
in view of (3.3), $\witi x\notin\partial \witi K$, thus
$\witi x\in\innt (\witi K)$.
The map $\phi$ from $\witi K$ into $Z$ thus satisfies all hypotheses of
 Theorem \ref{3.1},  thus
$\phi$ has a fixed point on $\witi K \subseteq \innt (K)$.
\end{proof}

\section{Anti-attracting forms  on Fractals.}

In this Section, 
we investigate  the notions of Section 3 
in the setting of forms in $\cd$. Namely, we define     specifics sets
in $\bre^J$ which will play the role of $Z$ and $K$
in Section 3. Moreover, we will investigate the notion
of an anti-attracting form with respect to a map obtained
normalizing $\Lb_r$. Let 
$$|x| :=\sum\limits_{d \in J} x_d \quad \forall\, x\in\bre^J,$$
$$Z:=\big\{E\in\bre^J: |E|=1\big\},$$
$$\cdn =:\big\{E\in\cd: |E|=1\big\}=\big\{E\in Z: E_d\ge 0\quad\forall\, d\in J\big\}.$$
So, $Z$ is an affine set in $\bre^J$ and $\cdn$ is a compact and convex subset
of $Z$. 
Note that 
$$\max\limits_{d\in J} E_d\ge \witi m:={1\over \#(J)}
\quad\forall\, E\in Z. \eqno (2.1)$$
We easily characterize   $\innt (\cdn)$. In fact we have
$\innt(\cdn)=\cdni{1}$  where
$$\cdni{1}:=\{E\in\cdn: E_d>0\ \forall\, d\in J\}\subseteq \td.$$
We next want to study the  map
$\Lbt_r $
 defined  as 
$$\Lbt_{ r}(E):=\disp{ {\Lb_{r}(E)\over \big| \Lb_{r}(E)\big|} }.$$
As it is known that if $E\in\td$ 
satisfies $E_d>0$ for every $d\in J$, so does $\Lb_r(E)$, then
$\Lbt_r$   maps continuously
$\cdni{1}$ into itself.
However, in general $\Lbt_r$ cannot be extended
continuously on all of $\tdn$. In fact, we could have
$\Lb_r(E)=0$ for some $E\in\cd\setminus\td$.
We so need a nice decomposition of $\partial\cdn$.
Let 
$$\cdni{2}:=\cdn\cap\td\setminus\cdni{1},$$
$$\cdni{3}=\{E\in\cdn\setminus\td: \Lb_r(E)\ne 0\},$$
$$\cdni{4}=\{E\in\cdn\setminus\td:  \Lb_r(E)=0\},$$
where $r\in W$. In fact, 
it can be proved that the  formula
$\Lb_r(E)=0$ is independent of $r\in W$,
but this is not important for our considerations since we fix
a given $r\in W$.
We easily have
$$ \partial\cdn=\cdni{2}\cup \cdni{3}\cup \cdni{4}.$$
We easily see that 
$\Lbt_r$   maps continuously $\cdni{1}\cup \cdni{2}\cup \cdni{3}$
into $\cdn$.
Note also that, when $E\in\cdni{1}\cup\cdni{2}\cup \cdni{3}$,
then $E$ is a (possibly degenerate) $r$-eigenform if and only if
it is a  fixed point of $\Lbt_r$.

We are going to prove
that every point $E\in \partial\cdn$ which is not an
$r$-degenerate eigenform is anti-attracting for $\Lbt_r$. We need the following 
lemma, which is well-known, but however, I will prove it.

\begin{lemma}\label{4.1}
If $E,E'\in\td$ and $0<\rho<\rho'$ we cannot have
$\Lb_r(E)\le \rho E$ and $\Lb_r(E')\ge \rho' E'$.
\end{lemma}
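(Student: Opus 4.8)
The plan is to argue by contradiction using a comparison/monotonicity principle for the renormalization operator together with the linear algebra fact from Lemma \ref{2.1}. Suppose, toward a contradiction, that there exist $E,E'\in\td$ and $0<\rho<\rho'$ with $\Lb_r(E)\le\rho E$ and $\Lb_r(E')\ge\rho' E'$. The first step is to record the two basic properties of $\Lb_r$ that will be used: (monotonicity) if $F\le G$ with $F,G\in\td$ then $\Lb_r(F)\le\Lb_r(G)$, which is immediate from the definition of $\Lb_r$ as an infimum of $S_{1,r}(\cdot)(v)$ over $v\in\cl(u)$ since $S_{1,r}(F)(v)\le S_{1,r}(G)(v)$ pointwise; and (positive homogeneity) $\Lb_r(cF)=c\,\Lb_r(F)$ for $c>0$. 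Both follow at once from the formulas for $S_{1,r}$ and $\Lb_r$.

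Next I would introduce the optimal comparison constant. By Lemma \ref{2.1} the set of $t>0$ with $E\le tE'$ is a nonempty closed half-line, so let $t_0:=\min\{t>0: E\le tE'\}$; then $E\le t_0E'$ and, by minimality, $E\le t E'$ fails for $t<t_0$, which forces $E(u_0)=t_0E'(u_0)$ for some non-constant $u_0\in\rvo$ (the inequality is "touching" at $u_0$). Now apply $\Lb_r$: monotonicity gives $\Lb_r(E)\le\Lb_r(t_0E')=t_0\Lb_r(E')$. Combining with the two hypothesized inequalities,
$$\rho' t_0 E' \le t_0\,\Lb_r(E') \le \text{(wrong direction)},$$
so instead I chain them as $\Lb_r(E)\le\rho E$ and $\rho E \le \rho t_0 E'$ from $E\le t_0E'$, giving $\Lb_r(E)\le \rho t_0 E'$; and $t_0\Lb_r(E')\ge t_0\rho'E'$. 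The point is to evaluate the composed chain $\rho' t_0 E'\le t_0\Lb_r(E')\ge\Lb_r(E)$ — which is not yet a contradiction — so the real mechanism must be the touching: evaluating $\Lb_r(E)\le t_0\Lb_r(E')$ at the minimizer together with $\Lb_r(E)\ge$ ... Actually the clean route is: from $\Lb_r(E)\le\rho E$ and $\Lb_r(E')\ge\rho'E'$ we get, using monotonicity and homogeneity, that $E\le t_0 E'$ propagates to $\rho E\ge\Lb_r(E)$... so $\Lb_r(E)\le\rho E\le\rho t_0 E'\le\frac{\rho}{\rho'}t_0\,\Lb_r(E')$; hence $\Lb_r(E)\le\frac{\rho}{\rho'}t_0\,\Lb_r(E')$ with $\frac{\rho}{\rho'}t_0<t_0$. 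But $\Lb_r(E)\le\Lb_r(t_0E')=t_0\Lb_r(E')$ is the best one can say in general; to get a contradiction I need that the touching at $u_0$ is preserved, i.e. that $\Lb_r(E)(v_0)=t_0\Lb_r(E')(v_0)$ for the relevant harmonic extension, so that $\Lb_r(E)\le\frac{\rho}{\rho'}t_0\Lb_r(E')$ is violated at that point.

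Therefore the heart of the argument, and the step I expect to be the main obstacle, is the strict monotonicity / touching analysis: showing that the minimality of $t_0$ together with irreducibility of $E,E'$ forces $\Lb_r(E)$ and $t_0\Lb_r(E')$ to again touch (at a suitable point), so that the strict loss of a factor $\rho/\rho'<1$ is impossible. Concretely, pick $u_0$ non-constant with $E(u_0)=t_0E'(u_0)$, let $v'$ be the minimizing extension for $\Lb_r(E')(u_0)$, and compute $\Lb_r(E)(u_0)\le S_{1,r}(E)(v')\le t_0 S_{1,r}(E')(v')=t_0\Lb_r(E')(u_0)$; combined with $\Lb_r(E)(u_0)\le\rho E(u_0)=\rho t_0 E'(u_0)$ and $\Lb_r(E')(u_0)\ge\rho'E'(u_0)$ this yields $\rho t_0 E'(u_0)\ge\Lb_r(E)(u_0)$ and $t_0\Lb_r(E')(u_0)\ge t_0\rho'E'(u_0)$, which still needs the opposite inequality $\Lb_r(E)(u_0)\ge t_0\Lb_r(E')(u_0)$ — and that is exactly what the minimality of $t_0$ provides if one instead defines $t_0$ as the largest $t$ with $tE'\le E$... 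I would set it up from that side: with $t_0:=\max\{t: tE'\le E\}$, apply $\Lb_r$ to get $t_0\Lb_r(E')\le\Lb_r(E)\le\rho E$, so $\Lb_r(E')\le\frac{\rho}{t_0}E'$; but also $\Lb_r(E')\ge\rho'E'$, forcing $\rho'\le\frac{\rho}{t_0}$, i.e. $t_0\le\rho/\rho'<1$, hence $t_0E'\le E$ with $t_0<1$, and iterating $\Lb_r(t_0^nE')\le\rho^n t_0^{?}...$ drives $E'$ below any multiple of itself — contradicting $E'\in\td$ via Lemma \ref{2.1}. I would present this last iteration cleanly: from $\rho'E'\le\Lb_r(E')$ deduce by induction $\rho'^nE'\le\Lb_r^n(E')$ on one hand, and from $\Lb_r(E)\le\rho E$ that $\Lb_r^n(E)\le\rho^nE$; comparing via the fixed constant relating $E,E'$ from Lemma \ref{2.1} gives $\rho'^n/\rho^n$ bounded, contradicting $\rho<\rho'$.
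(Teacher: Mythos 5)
Your final, ``clean'' version --- record monotonicity and positive homogeneity of $\Lambda_r$, iterate to get $\Lambda_r^n(E)\le\rho^n E$ and $\Lambda_r^n(E')\ge(\rho')^n E'$, then compare via the two-sided constants of Lemma \ref{2.1} to force $(\rho'/\rho)^n$ bounded, a contradiction --- is exactly the paper's proof and is correct. The exploratory ``touching''/$t_0$ detours in the middle are unnecessary (and contain a slip: from $t_0E'\le E$ and $\Lambda_r(E)\le\rho E$ you get $t_0\Lambda_r(E')\le\rho E$, not $\Lambda_r(E')\le\frac{\rho}{t_0}E'$), so they should simply be deleted in favor of the iteration argument you end with.
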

\begin{proof}
By contradiction, if  $\Lb_r(E)\le \rho E$ and $\Lb_r(E')\ge \rho' E'$,
using an inductive argument we obtain
$${\Lb_r}^n(E)\le \rho^n E, \quad {\Lb_r}^n(E')\ge (\rho')^n E'\ \ \forall n\in\bna.$$
However, in view of Lemma \ref{2.1}, there exist $c>0$ $c'>0$
such that $c E'\ge E\ge c' E'$ which implies,
${\Lb_r}^n(E)\ge c ' {\Lb_r}^n(E')$ for every positive integer $n$, 
so that
$$  \rho^n E \ge  {\Lb_r}^n(E)\ge c' {\Lb_r}^n(E')\ge 
c'  {\rho'}^n E'\ge {c'\over c} {\rho'}^n E
$$
and, since  $0<\rho<\rho'$, this cannot hold  for large $n$.
\end{proof}

We now fix a form
$\witi E\in \innt(\cdn)=\cdni{1}$, and,  according
to the notation of the previous section, put
$$Ext(E)=\big\{\witi E+t(E-\witi E): t>1\big\} \ \forall\, E\in Z\setminus\{\witi E\}.
$$
Here, $\cdn$ plays the role of $K$ in Section 3.
Also, define $p:Z\setminus \{\witi E\}\to\partial  \cdn$ as
in the previous section.

\begin{lemma}\label{4.2}
Let $r\in W$. Then every $\bar E\in \cdni{4}$ is
anti-attracting for $\Lbt_r$.
\end{lemma}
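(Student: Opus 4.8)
The plan is to use the key structural fact about $\cdni{4}$: if $\bar E\in\cdni{4}$ then $\Lb_r(\bar E)=0$, and by continuity of the map $(r,E)\mapsto\Lb_r(E)$ (Lemma \ref{2.2}), $\Lb_r(E)$ is small — in particular $|\Lb_r(E)|$ is small — for all $E$ in a neighborhood of $\bar E$ in $\cdn$. The strategy is to show that for $E$ near $\bar E$ in $\cdni{1}$, the normalized image $\Lbt_r(E)=\Lb_r(E)/|\Lb_r(E)|$ cannot lie on the half-line $Ext(E)=\{\witi E+t(E-\witi E):t>1\}$, and this will follow by a direct estimate separating the ``large'' coordinate behaviour forced by $\witi E$ from the ``small'' size of $\Lb_r(E)$ before normalization.

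First I would fix a neighborhood $U_{\bar E}$ of $\bar E$ in $Z$ small enough that every $E\in U_{\bar E}\cap\cdni{1}$ satisfies $E\neq\witi E$ (possible since $\witi E\in\innt(\cdn)$ while $\bar E\in\partial\cdn$, so they are at positive distance). Next, suppose for contradiction that some $E\in U_{\bar E}\cap\cdni{1}$ has $\Lbt_r(E)\in Ext(E)$, i.e. $\Lbt_r(E)=\witi E+t(E-\witi E)$ for some $t>1$. The point of this is that writing $F:=\Lbt_r(E)$, this relation rearranges to $E-\witi E=\frac1t(F-\witi E)$, hence $F-E=(1-\frac1t)(F-\witi E)$ with $1-\frac1t\in(0,1)$; geometrically $E$ lies strictly between $\witi E$ and $F=\Lbt_r(E)$ on the segment. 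I would then exploit that $\Lbt_r(E)$, being a normalized image, satisfies $\Lbt_r(E)\in\cdn$, and compare sizes: because $\Lb_r(E)$ is close to $\Lb_r(\bar E)=0$, we can make $|\Lb_r(E)|$ smaller than any prescribed $\varepsilon>0$ on $U_{\bar E}\cap\cdni{1}$ (shrinking $U_{\bar E}$). The contradiction I am aiming for is with the fact that along the half-line from $\witi E$ through $E$, the point $p(E)\in\partial\cdn$ lies beyond $E$ (Lemma \ref{3.00} iii), since $E\in\cdni{1}=\innt(\cdn)$), so the parameter $t$ in $\Lbt_r(E)=\witi E+t(E-\witi E)$ being $>1$ would force $\Lbt_r(E)$ to either still be inside $\cdn$ or exit it; combined with $\Lbt_r(E)\in\cdn$ this constrains $t\le t_0$ where $\witi E+t_0(E-\witi E)=p(E)$, and since $E$ is bounded away from $\partial\cdn$ only if it is bounded away from $\bar E$ — which it is \emph{not} — I instead need the quantitative route: use $(2.1)$, namely $\max_{d\in J}E_d\ge\witi m$ for $E\in Z$, applied to $\Lbt_r(E)$, to deduce $\max_d(\Lb_r(E))_d=|\Lb_r(E)|\cdot\max_d\Lbt_r(E)_d\ge\witi m\,|\Lb_r(E)|$, so $|\Lb_r(E)|\le\#(J)\max_d(\Lb_r(E))_d$, which is anyway small; meanwhile from $\Lbt_r(E)=\witi E+t(E-\witi E)$ with $t>1$ and $\witi E\in\cdni{1}$ fixed, each coordinate $\Lbt_r(E)_d=\witi E_d+t(E_d-\witi E_d)$, and since $E\to\bar E$ we get $\Lbt_r(E)_d\to\witi E_d+t_*(\bar E_d-\witi E_d)$ along any subsequence on which $t\to t_*$; the constraint $\Lbt_r(E)\in\cdn$ (all coordinates $\ge 0$, sum $1$) then pins down $t_*$ and $\Lbt_r(E)$ to a definite nonzero limit, contradicting that $\Lbt_r(E)$ is the normalization of the vanishing vector $\Lb_r(E)$ in a way that forces no such definite limit — more precisely, I would show the assumed relation forces $E$ itself to equal $\frac1t\Lbt_r(E)+(1-\frac1t)\witi E$, a \emph{fixed} convex combination structure incompatible with $|\Lb_r(E)|\to 0$, since as $E\to\bar E$ the left side tends to $\bar E\in\partial\cdn$ while $\Lbt_r(E)$ would be forced by the fixed weights into $\innt(\cdn)$.

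Let me streamline this: the cleanest argument is by contradiction along a sequence $E^{(n)}\to\bar E$ in $\cdni{1}$ with $\Lbt_r(E^{(n)})\in Ext(E^{(n)})$. Write $\Lbt_r(E^{(n)})=\witi E+t_n(E^{(n)}-\witi E)$, $t_n>1$. Since $\Lbt_r(E^{(n)})\in\cdn$ which is compact, pass to a subsequence so $\Lbt_r(E^{(n)})\to G\in\cdn$. From $E^{(n)}-\witi E=\frac1{t_n}(\Lbt_r(E^{(n)})-\witi E)$ and $E^{(n)}-\witi E\to\bar E-\witi E\neq 0$ (as $\bar E\in\partial\cdn$, $\witi E\in\innt\cdn$), we get $\frac1{t_n}\to s:=\frac{\|\bar E-\witi E\|}{\|G-\witi E\|}\in(0,1]$ and $\bar E-\witi E=s(G-\witi E)$, so $\bar E=\witi E+s(G-\witi E)\in[\witi E,G]$; but $\bar E\in\partial\cdn$ while $\witi E\in\innt\cdn$ and $G\in\cdn$, so by Lemma \ref{3.0} i) the open segment $]\,G,\witi E\,[$ lies in $\innt\cdn$, forcing $s=1$, i.e. $\bar E=G$ and $t_n\to 1$. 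Thus $\Lbt_r(E^{(n)})\to\bar E$. But then $\Lb_r(E^{(n)})=|\Lb_r(E^{(n)})|\,\Lbt_r(E^{(n)})\to 0$ since $\Lb_r(\bar E)=0$ by definition of $\cdni{4}$ and Lemma \ref{2.2}, which is consistent, so I need one more ingredient to close it: I would use that $\bar E\in\cdn\setminus\td$, hence $\bar E$ has a zero coordinate $\bar E_{d_0}=0$, whereas for each $n$, $\Lbt_r(E^{(n)})_{d_0}=\witi E_{d_0}+t_n(E^{(n)}_{d_0}-\witi E_{d_0})$ with $E^{(n)}_{d_0}>0$; as $t_n>1$ and $\witi E_{d_0}>0>E^{(n)}_{d_0}-\witi E_{d_0}$ eventually, this is $<\witi E_{d_0}+ (E^{(n)}_{d_0}-\witi E_{d_0})=E^{(n)}_{d_0}$, wait — this just says $\Lbt_r(E^{(n)})_{d_0}<E^{(n)}_{d_0}\to 0$, again consistent. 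The genuine obstruction — and the step I expect to be hard — is ruling out this borderline ``tangential'' case $t_n\to 1^+$: here one must show that $\Lbt_r$ does not push $E$ \emph{outward past} $E$ in the limit, i.e. exploit a monotonicity/comparison property of $\Lb_r$ near $\partial\cdn\setminus\td$. I would handle it via Lemma \ref{4.1}: if infinitely often $\Lbt_r(E^{(n)})\in Ext(E^{(n)})$ with $t_n>1$, then $\Lb_r(E^{(n)})=|\Lb_r(E^{(n)})|\,(\witi E+t_n(E^{(n)}-\witi E))$, and comparing with $E^{(n)}$ coordinate-wise using that $\witi E\succeq \witi m\cdot(1,\dots,1)$ in the relevant sense, I would derive an inequality of the form $\Lb_r(E^{(n)})\succeq \rho_n E^{(n)}$ with $\rho_n$ bounded below — contradicting $\Lb_r(E^{(n)})\to 0$ while $E^{(n)}\to\bar E\neq 0$ — OR an inequality going the other way that contradicts Lemma \ref{4.1} when paired with a fixed $r$-eigenform; the precise algebra of extracting such a $\preceq$/$\succeq$ comparison from membership in $Ext(E)$ is the routine-but-delicate computation I would carry out in detail there.
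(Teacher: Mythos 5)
Your overall strategy (exploit $\Lb_r(\bar E)=0$, hence $|\Lb_r(E)|$ small near $\bar E$, and rule out $\Lbt_r(E)\in Ext(E)$ by contradiction) is indeed the paper's strategy, and your sequence analysis showing that any bad sequence must have $t_n\to 1$ and $\Lbt_r(E^{(n)})\to\bar E$ is correct as far as it goes. But you stop exactly where the content of the lemma lies: you concede that this ``tangential'' case is the real obstruction and leave its resolution as a ``routine-but-delicate computation'', sketching two exits, neither of which works. An inequality $\Lb_r(E^{(n)})\succeq\rho_n E^{(n)}$ with $\rho_n$ bounded below cannot be extracted from $\Lbt_r(E^{(n)})\in Ext(E^{(n)})$: for any $d$ with $\bar E_d=0$ one has $\Lbt_r(E^{(n)})_d=\witi E_d+t_n(E^{(n)}_d-\witi E_d)<E^{(n)}_d$, and this coordinate may even vanish, so membership in $Ext$ only yields \emph{upper} bounds on the coordinates of $\Lbt_r(E^{(n)})$ relative to $E^{(n)}$. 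The alternative of contradicting Lemma \ref{4.1} ``paired with a fixed $r$-eigenform'' is circular: at this stage no $r$-eigenform is known to exist --- its existence is precisely what the paper is proving.

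What actually closes the argument (and is the step missing from your proposal) is a quantitative comparison with the fixed interior form $\witi E$. By continuity of $p$ and $p(\bar E)=\bar E$, one shrinks $U$ so that $p(E)_d\le 2E_d$ for all $E\in U\cap\cdn$ and all $d\in J$ (for $d$ with $\bar E_d=0$ this is automatic, since $E\in[p(E),\witi E]$ gives $p(E)_d\le E_d$). If now $\Lbt_r(E)\in Ext(E)$, Lemma \ref{3.00} iv) gives $\Lbt_r(E)_d\in[E_d,p(E)_d]$, hence $\Lbt_r(E)_d\le 2E_d$ for every $d$. Writing $\Lb_r(E)=\alpha\,\Lbt_r(E)$ and combining $\max_{d\in J}\Lbt_r(E)_d\ge\witi m$ with $\big(\Lb_r(E)\big)_d<\veps$ (available on a small $U$ because $\Lb_r(\bar E)=0$), one gets $\alpha<\veps/\witi m$, hence $\Lb_r(E)\le (2\veps/\witi m)\,E$ as forms. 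Choosing $\veps$ so small that $2\veps/\witi m<c$, where $c>0$ satisfies $\Lb_r(\witi E)\ge c\,\witi E$ (Lemma \ref{2.1}, both forms being in $\td$), this contradicts Lemma \ref{4.1} applied to the pair $E,\witi E$ --- no eigenform and no limit argument is needed. You mention some of these ingredients (the bound $\max_d\Lbt_r(E)_d\ge\witi m$, the relation between $E$ and $p(E)$) in passing, but they are never assembled into this estimate, and without it your proposal only shows that hypothetical bad points are asymptotically tangential, which, as you yourself note, is not a contradiction.
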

\begin{proof}
We prove that there exists a neighborhood
$U$ of $\bar E$ such that 
$$p(E)_d\le 2 E_d \quad \forall\, E\in U\cap\cdn
\quad \forall\, d\in J.\eqno (4.2)$$
Note that by Lemma \ref{3.00} iii), if
$E\in\cdn\setminus\{\witi E\}$,  we have $E\in [p(E),\witi E]$, thus
$$E_d\in [p(E)_d,\witi E_d] \quad  \forall\, d\in J.\eqno (4.3)$$
Thus, since $E\in\cdni{1}$ and by (4.3), 
we have $p(E)_d<E_d$,
and a fortiori   (4.2), for $d$ such that $\bar E_d=0$
(for a suitable $U$). On the other hand, we have
$$E_d\llim_{E\to \bar E} \bar E_d, \quad
p( E)_d\llim_{E\to \bar E} p(\bar E)_d \quad\forall \,d\in J,$$
by the continuity of $p$. 
Since $\bar E\in\cdni{4}\subseteq \partial\cdn$,
by Lemma \ref{3.00} iii)  we have
 $p(\bar E)=\bar E$. Thus
(4.2) holds for a suitable $U$, also for $d$ such that $\bar E_d>0$, and
(4.2) is proved.  
We now prove by contradiction that,
possibly restricting $U$,
given $E  \in U\cap\cdni{1}$ we have 
$$\Lbt_r(E)\notin Ext(E).\eqno (4.4)$$
By Lemma \ref{3.00} iv) we have
$E'_d\in [E_d, p(E)_d]$ for every
 $E'\in Ext(E)\cap \cdn$ and every $d\in J$. Thus, by (4.2),
  if (4.4) does not hold we have
 $$\Lbt_r(E)_d\le 2E_d\quad \forall\, d\in J.\eqno (4.5)$$
Take a positive $\veps$ which we will specify later.
Since $\bar E\in\cdni{4}$, by definition we can choose $U$
such that
$$\big(\Lb_r(E)\big)_d <\veps\quad\forall\, 
E\in U\cap \cdn,\quad \forall\,  d\in J.\eqno(4.6)$$
For such $E$, by the definition of
$\Lbt_r$ we have 
$\alpha \Lbt_r(E)=
\Lb_r(E)$ for some $\alpha>0$. Thus, by (4.1),  for some $\bar d\in J$ we have
$$\alpha \witi m\le \alpha \big(\Lbt_r(E)\big)_{\bar d}=
\big(\Lb_r(E)\big)_{\bar d}<\veps.
$$
It follows that 
 $\alpha<{\veps\over\witi m}.$
Hence, in view of (4.5)
we have
$$\big(\Lb_r(E)\big)_d=\alpha\big(\Lbt _r(E)\big)_d\le
2{\veps\over \witi m} E_d\quad\forall\, d\in J.
$$
Thus,  we have
$\Lb_r(E)\le \disp{{2\veps\over \witi m}E}$.
If $c>0$ is so that $\Lb_r(\witi E)\ge c \witi E$ (see Lemma \ref{2.1})
and we choose $\veps$ so that $\disp{{2\veps\over \witi m}<c}$, we have contradicted
Lemma \ref{4.1}. Such a  contradiction
shows that (4.4) holds and the Lemma is proved.
\end{proof}

\begin{lemma}\label{4.3}
Let $r\in W$. Then every $\bar E  \in \cdni{2}\cup\cdni{3}$ 
such that  $\Lbt_r(\bar E)\ne \bar E$
is anti-attracting for $\Lbt_r$.
\end{lemma}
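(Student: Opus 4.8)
The statement of Lemma \ref{4.3} concerns a point $\bar E \in \cdni{2}\cup\cdni{3}$ which is not a fixed point of $\Lbt_r$, and we must show it is anti-attracting, i.e.\ that for $E$ near $\bar E$ in $\cdni{1}$ one has $\Lbt_r(E)\notin Ext(E)$. The natural idea is to use continuity of $\Lbt_r$ near $\bar E$ (which holds since $\bar E\in\cdni{2}\cup\cdni{3}$, where $\Lb_r(\bar E)\ne 0$, so the normalization is well-defined and continuous there by Lemma \ref{2.2}) together with a separation argument: since $\Lbt_r(\bar E)\ne\bar E$, the point $\Lbt_r(\bar E)$ lies strictly off the ray from $\witi E$ through $\bar E$, and by continuity the same stays true in a neighborhood. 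Precisely, I would first treat $\bar E=p(\bar E)$ (true since $\bar E\in\partial\cdn$, by Lemma \ref{3.00} iii)) and observe that $\Lbt_r(\bar E)\notin Ext(\bar E)$: otherwise $\Lbt_r(\bar E)=\witi E+t(\bar E-\witi E)$ for some $t>1$; but $\Lbt_r(\bar E)\in\cdn$ and $\bar E\in\partial\cdn$, and a point of $[\witi E,\bar E]$ extended beyond $\bar E$ leaves $\cdn$ unless it equals $\bar E$ — more carefully, since $\witi E\in\innt(\cdn)$ and $\bar E\in\partial\cdn$, Lemma \ref{3.00} gives that the only point of $\partial\cdn$ on that ray is $\bar E$ itself and the ray beyond $\bar E$ lies outside $\cdn$, forcing $\Lbt_r(\bar E)=\bar E$, a contradiction.

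**Then the neighborhood argument.** Having $\Lbt_r(\bar E)\notin Ext(\bar E)\cup\{\text{ray beyond }\bar E\}$ and $\Lbt_r(\bar E)\ne\bar E$, I want to propagate this to nearby $E\in\cdni{1}$. The clean way: for $E\in\cdni{1}$ close to $\bar E$, by Lemma \ref{3.00} iv) any point of $Ext(E)\cap\cdn$ lies in $[E,p(E)]$, so $\Lbt_r(E)\in Ext(E)$ would force $\Lbt_r(E)\in[E,p(E)]$, i.e.\ $\Lbt_r(E)$ lies on the segment of $\partial\cdn$-ray between $E$ and $p(E)$, hence in particular $\Lbt_r(E)_d$ is sandwiched between $E_d$ and $p(E)_d$ coordinatewise. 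As $E\to\bar E$ we have $E\to\bar E$, $p(E)\to p(\bar E)=\bar E$ (continuity of $p$, Lemma \ref{3.00} ii)), and $\Lbt_r(E)\to\Lbt_r(\bar E)$ (continuity of $\Lbt_r$ on $\cdni{1}\cup\cdni{2}\cup\cdni{3}$, already noted). The sandwiching would then pass to the limit and give $\Lbt_r(\bar E)=\bar E$, contradicting the hypothesis. Hence for a suitable neighborhood $U$ of $\bar E$ and all $E\in U\cap\cdni{1}$ we get $\Lbt_r(E)\notin Ext(E)$, which is exactly anti-attraction at $\bar E$.

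**Main obstacle.** The delicate point is making the limiting/sandwiching step rigorous: one must be sure that "$\Lbt_r(E)\in[E,p(E)]$ for $E$ arbitrarily near $\bar E$" genuinely forces $\Lbt_r(\bar E)=\bar E$ in the limit, and not merely $\Lbt_r(\bar E)\in\{\bar E\}$ up to the ray degenerating. Since $p(E)\to\bar E$ and $E\to\bar E$, the segment $[E,p(E)]$ shrinks to the single point $\bar E$, so any selection from it converges to $\bar E$ — this is where the argument closes, and it is genuinely the crux; everything else (existence of the neighborhood on which $\Lb_r\ne 0$, continuity of the normalized map, the elementary geometry of $Ext$ and $p$ from Lemma \ref{3.00}) is routine given the earlier results. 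A secondary point to handle carefully is that $\cdni{3}$ points lie in $\cdn\setminus\td$, so $E$ near $\bar E$ need not stay in $\td$; but anti-attraction only quantifies over $E\in U\cap\innt(\cdn)=U\cap\cdni{1}$, and on $\cdni{1}$ the map $\Lbt_r$ is fine, so this causes no trouble — one just needs continuity of $\Lbt_r$ at $\bar E$ along sequences in $\cdni{1}$, which follows since $\Lb_r(\bar E)\ne0$.
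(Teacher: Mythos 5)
Your proposal is correct and follows essentially the same route as the paper's proof: since $\bar E\in\partial\cdn$ gives $p(\bar E)=\bar E$, the hypothesis $\Lbt_r(\bar E)\ne\bar E$ means $\Lbt_r(\bar E)\notin[\bar E,p(\bar E)]=\{\bar E\}$, and continuity of $\Lbt_r$ (valid on $\cdni{2}\cup\cdni{3}$) and of $p$ propagates $\Lbt_r(E)\notin[E,p(E)]$ to a neighborhood, whence Lemma \ref{3.00} iv) yields $\Lbt_r(E)\notin Ext(E)$. Your sandwiching/limit argument is just a spelled-out version of the paper's ``by continuity'' step, and your preliminary observation that $\Lbt_r(\bar E)\notin Ext(\bar E)$ is harmless but not needed, since anti-attraction only quantifies over $E\in U\cap\cdni{1}$.
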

\begin{proof}
Since $p(\bar E)=\bar E$
 we have $\Lbt_r(\bar E)\notin [\bar E, p(\bar E)]=\{\bar E\}$.
By continuity, there exists $U$ neighborhood of $\bar E$
such that for every $E\in U\cap \cdni{1}$ we have
$\Lbt_r(E)\notin [E, p( E)]$, thus, by Lemma \ref{3.00} iv), 
$\Lbt_r(E)\notin Ext(E)$.
\end{proof}

\section {The Theorem.}

In view of Lemmas \ref{4.2} and \ref{4.3}, we can use
Theorem \ref{3.2}, provided that also every degenerate $r$-eigenform
in $\partial \cdn$ is anti-attracting.
However, this does not necessarily hold, but depends 
on the $r$-eigenform. More precisely, we have
to study carefully the local behavior of $\Lb_r$ near a
degenerate  $r$-eigenform. Recall that if $\bar E\in \cd\setminus\td$,
$\bar E\ne 0$,
then the {\it kernel} $\bar E^{-1}(0)$ of $\bar E$ 
strictly contains the set of the constant functions.

Now, an argument due to Sabot (see \cite{Sa}) shows that
we can approximate $\Lb_r$ near $\bar E$ by minimizing
along functions in $\bar E^{-1}(0)$.
Namely, for $u\in \bar E^{-1}(0)$ let 
$$\Lb_{r, \bar E}(E)(u)=\inf\big\{S_{1,r}\big(E)(v):
v\in \cl(u), v\circ\psi_i\in \bar E^{-1}(0)\ \forall\, i=1,...,k\big\}.
$$
We say that the degenerate $r$-eigenform 
 $\bar E \in\cdni{3}$ with eigenvalue $\rho$ is {\it repulsing} if 
$$
\exists\, \rho'\ge\rho: \,  \Lb_{r, \bar E}(\witi E)(u)\ge \rho'  \witi E(u)
  \ \forall\,   u\in \bar E^{-1}(0). 
\eqno (5.1)$$
We now prove that every  repulsing degenerate $r$-eigenform
 $\bar E\in\cdni{3}$ is anti-attracting for $\Lbt_r$.
 We need three preliminary lemmas.

\begin{lemma}\label{5.2}
If $E\in\cdni{1}$,  the ratio $\disp{E\over \tilde E}$ attains
 a maximum $\eta_E$ on the set of all non-constant functions
 in $\bar E^{-1}(0)$.
 \end{lemma}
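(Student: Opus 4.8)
The plan is to adapt the compactness argument from the proof of Lemma~\ref{2.1}. First I would note that $\bar E^{-1}(0)$ is a linear subspace of $\rvo$: it is the null set of the positive semidefinite quadratic form $\bar E$, hence coincides with the kernel of the associated symmetric bilinear form. As already recalled in the text, since $\bar E\in\cd\setminus\td$ this subspace strictly contains the constant functions. Since both $E$ and $\witi E$ are unchanged when a constant is added to their argument, the ratio $\disp{E(u)\over\witi E(u)}$ depends only on $u$ modulo constants; so I would pass to the slice $W:=\bar E^{-1}(0)\cap\{u\in\rvo:u(P_1)=0\}$, which is a linear complement of the constants inside $\bar E^{-1}(0)$. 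Every non-constant $u\in\bar E^{-1}(0)$ differs by a constant from a unique nonzero $u'\in W$, and $E(u)=E(u')$, $\witi E(u)=\witi E(u')$.

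Next, since $E$ and $\witi E$ are homogeneous of degree $2$, the ratio is homogeneous of degree $0$ on $W\setminus\{0\}$, so it is enough to take the maximum over the compact sphere $S:=\{u'\in W:||u'||=1\}$. For $u'\in S$ the function $u'$ is non-constant (it vanishes at $P_1$ and is not identically zero), hence $\witi E(u')>0$ because $\witi E\in\cdni{1}\subseteq\td$; therefore $u'\mapsto\disp{E(u')\over\witi E(u')}$ is continuous on $S$. By compactness it attains a maximum $\eta_E$ there, and by the reduction above this is the maximum of $\disp{E\over\witi E}$ over all non-constant functions in $\bar E^{-1}(0)$.

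There is essentially no obstacle in this argument: the only two points deserving a word are that $\bar E^{-1}(0)$ is genuinely a linear subspace — immediate from positive semidefiniteness — and that $\witi E$ is strictly positive on non-constant functions — immediate from $\witi E\in\td$. Both are already available from the setup, so the proof is a routine continuity-plus-compactness argument parallel to Lemma~\ref{2.1}.
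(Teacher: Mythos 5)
Your argument is correct and is exactly the normalization-plus-compactness proof the paper intends when it says the proof is ``similar to that of Lemma \ref{2.1}'': subtract $u(P_1)$, rescale to the unit sphere inside the linear subspace $\bar E^{-1}(0)$, and use continuity of the ratio there since $\witi E>0$ on non-constant functions. The only detail you add beyond Lemma \ref{2.1} — that $\bar E^{-1}(0)$ is a linear subspace containing the constants, so the normalized function stays in it — is precisely the point needed, and you justify it correctly.
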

\begin{proof}
The proof is similar to that of Lemma \ref{2.1}.
\end{proof}

\begin{lemma} \label{5.3} We have

i) $\eta_E>0$.

ii) $E(u)\ge \eta_E \witi E(u)\quad\forall\, u\in \bar E^{-1}(0)$.

iii) The exists a non-constant $\bar u\in \bar E^{-1}(0)$ such that
$E(\bar u)=\eta_E \witi E(\bar u)$.
\end{lemma}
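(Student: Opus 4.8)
\textbf{Proof proposal for Lemma \ref{5.3}.}

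The plan is to exploit the fact, recorded in Lemma \ref{5.2}, that the ratio $E/\witi E$ attains a maximum $\eta_E$ over the non-constant functions in $\bar E^{-1}(0)$, and to read off the three assertions from the definition of this maximum together with the positivity properties of $E$ and $\witi E$. The starting observation is that $\bar E^{-1}(0)$ strictly contains the constants: since $\bar E\in\cdni{3}\subseteq\cd\setminus\td$, the form $\bar E$ is reducible, so by the characterization of $\td$ recalled in Section 2 the graph on $\vo$ with edges $\{P_{j_1},P_{j_2}\}$ such that $\bar E_{\{j_1,j_2\}}>0$ is disconnected; hence there is a non-constant $u$ vanishing on $\bar E$, and in particular the set over which the maximum in Lemma \ref{5.2} is taken is non-empty.

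First I would prove (iii), which is really just the statement that the maximum is attained — exactly what Lemma \ref{5.2} gives: there is a non-constant $\bar u\in\bar E^{-1}(0)$ with $E(\bar u)=\eta_E\,\witi E(\bar u)$. Next, (i): for this same $\bar u$ we have $\witi E(\bar u)>0$, because $\witi E\in\cdni{1}$ has all coefficients strictly positive, hence $\witi E\in\td$, so $\witi E$ vanishes only on constants and $\bar u$ is non-constant; likewise $E(\bar u)\ge 0$, and in fact $E(\bar u)$ could in principle be $0$, so a little care is needed. But $\bar u\in\bar E^{-1}(0)$ means precisely $E(\bar u)=0$ — wait, here one must be careful about which form the kernel refers to. The ratio in Lemma \ref{5.2} is $E/\witi E$ with $E\in\cdni{1}$, while the constraint set is $\bar E^{-1}(0)$, the kernel of the \emph{degenerate} form $\bar E$, not of $E$. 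So for $u\in\bar E^{-1}(0)$ non-constant we have $E(u)>0$ because $E\in\cdni{1}\subseteq\td$ and $u$ is non-constant; therefore $\eta_E=\max E(u)/\witi E(u)>0$, which is (i).

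Finally (ii): by definition of $\eta_E$ as the maximum of $E(u)/\witi E(u)$ over non-constant $u\in\bar E^{-1}(0)$, we get $E(u)\ge\eta_E\,\witi E(u)$ for every non-constant $u\in\bar E^{-1}(0)$; and for constant $u$ both sides are $0$, so the inequality holds for all $u\in\bar E^{-1}(0)$ — but note the direction: the maximum of the ratio gives $E(u)\le\eta_E\witi E(u)$, so as stated (ii) would need $\eta_E$ to be a \emph{minimum}, not a maximum, unless I have the roles of $E$ and $\witi E$ reversed. I expect the main (and only) subtlety to be bookkeeping of this kind: pinning down whether Lemma \ref{5.2}'s $\eta_E$ is $\max(E/\witi E)$ or $\max(\witi E/E)$ and hence whether (ii) reads with $\ge$ or $\le$, and making sure the non-constant function witnessing (iii) is the same one used to certify $\eta_E>0$ in (i). Modulo that, each of the three parts is a one-line consequence of Lemma \ref{5.2}, the irreducibility of $\bar E$, and the strict positivity of the coefficients of $\witi E\in\cdni{1}$.
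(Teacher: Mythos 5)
Your proposal is correct and takes essentially the paper's route: the paper's own proof of this lemma is simply ``Trivial,'' i.e.\ exactly the reading-off from Lemma \ref{5.2} (attainment of the extremum) together with the irreducibility of $E,\witi E\in\cdni{1}$ and the fact that $\bar E^{-1}(0)$ contains non-constant functions, which is what you spell out. Your bookkeeping worry is also well founded: for (ii) to hold with $\ge$, as the later uses in Lemmas \ref{5.4} and \ref{5.5} require, the $\eta_E$ of Lemma \ref{5.2} must be read as the \emph{minimum} of the ratio $E/\witi E$ over the non-constant functions in $\bar E^{-1}(0)$ (``maximum'' there is a slip), and with that reading each of (i)--(iii) is immediate, just as you argue.
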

\begin{proof} Trivial. \end{proof}

 The following Lemma is the most technical point in this paper,
 where we use a previous result of Sabot (a similar result was proved later
 by Metz in \cite{Me}) whose proof is long.

\begin{lemma}\label{5.4}
Let $r\in W$. If $\bar E\in\cdni{3}$ and $\Lbt_r(\bar E)=\bar E$,
then for every $\alpha<1$ there exists
$U$  neighborhood
of $\bar E$ such that 
$$\Lb_r(E)(u)\ge \alpha\eta_E \Lb_{r,\bar E}(\witi E)(u)
\quad  \forall \, E\in U\cap \cdni{1}\ \forall u\in \bar E^{-1}(0).$$
\end{lemma}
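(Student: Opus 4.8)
The plan is to compare $\Lb_r(E)$ with the ``restricted'' operator $\Lb_{r,\bar E}$ by a continuity/compactness argument, exploiting that $\bar E$ is a fixed point of $\Lbt_r$ and that on the kernel $\bar E^{-1}(0)$ the form $E$ dominates $\eta_E\witi E$ (Lemma \ref{5.3} ii)). Fix $u\in\bar E^{-1}(0)$ and fix $\alpha<1$; choose $\beta$ with $\alpha<\beta<1$. The key point, which is exactly the content of the previous result of Sabot, is that the minimizing extensions $v\in\cl(u)$ for $S_{1,r}(\bar E)$ — equivalently, the harmonic extensions with respect to the degenerate form $\bar E$ — can be taken so that each $v\circ\psi_i$ lies in $\bar E^{-1}(0)$; indeed, since $\Lbt_r(\bar E)=\bar E$, the value $\Lb_r(\bar E)(u)=\rho\bar E(u)=0$ for $u\in\bar E^{-1}(0)$, so the optimal $v$ for $S_{1,r}(\bar E)$ has $S_{1,r}(\bar E)(v)=0$, forcing $\bar E(v\circ\psi_i)=0$ for all $i$, i.e.\ $v\circ\psi_i\in\bar E^{-1}(0)$. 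Thus $\Lb_{r,\bar E}(\bar E)(u)=\Lb_r(\bar E)(u)$, and more usefully the admissible competitors for $\Lb_{r,\bar E}$ capture the relevant near-optimal extensions.

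Next I would set up the estimate on a competitor. Let $v$ be admissible for $\Lb_{r,\bar E}(\witi E)(u)$, i.e.\ $v\in\cl(u)$ and $v\circ\psi_i\in\bar E^{-1}(0)$ for every $i$. For such $v$, by Lemma \ref{5.3} ii) applied to each $v\circ\psi_i\in\bar E^{-1}(0)$ we get $E(v\circ\psi_i)\ge\eta_E\witi E(v\circ\psi_i)$, hence
$$S_{1,r}(E)(v)=\sum_{i=1}^{k}r_iE(v\circ\psi_i)\ge\eta_E\sum_{i=1}^{k}r_i\witi E(v\circ\psi_i)=\eta_E\,S_{1,r}(\witi E)(v).$$
Taking the infimum over all such $v$ on the right gives $S_{1,r}(E)(v)\ge\eta_E\,\Lb_{r,\bar E}(\witi E)(u)$ for every admissible $v$; but to bound $\Lb_r(E)(u)=\inf\{S_{1,r}(E)(w):w\in\cl(u)\}$ from below we must control the true minimizer $w^{E}$ of $S_{1,r}(E)$, which need not satisfy $w^{E}\circ\psi_i\in\bar E^{-1}(0)$. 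This is where continuity enters: as $E\to\bar E$ in $\cdni{1}$, the minimizer $w^{E}$ converges to a minimizer $w^{\bar E}$ of $S_{1,r}(\bar E)$ (minimizers are unique up to the kernel, and the variational problems depend continuously on $E$ by Lemma \ref{2.2} and standard lower-semicontinuity), and by the paragraph above any minimizer $w^{\bar E}$ has $w^{\bar E}\circ\psi_i\in\bar E^{-1}(0)$. Hence for $E$ in a small enough neighborhood $U$ of $\bar E$, the minimizer $w^{E}$ is close to an admissible competitor for $\Lb_{r,\bar E}(\witi E)$; projecting $w^{E}$ onto the space $\{w:w\circ\psi_i\in\bar E^{-1}(0)\ \forall i\}$ changes $S_{1,r}(E)(w^{E})$ and $S_{1,r}(\witi E)(w^{E})$ by an amount that is $o(1)$ relative to the quantities involved, which lets us absorb the loss into the factor $\beta$ and then into $\alpha$.

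Assembling these pieces: for $E\in U\cap\cdni{1}$,
$$\Lb_r(E)(u)=S_{1,r}(E)(w^{E})\ge\beta\,S_{1,r}(E)(\widehat w)\ge\beta\eta_E\,S_{1,r}(\witi E)(\widehat w)\ge\beta\eta_E\,\Lb_{r,\bar E}(\witi E)(u)\ge\alpha\eta_E\,\Lb_{r,\bar E}(\witi E)(u),$$
where $\widehat w$ denotes the admissible competitor obtained from $w^{E}$ and the first inequality uses that $S_{1,r}(E)(w^{E})\ge\beta S_{1,r}(E)(\widehat w)$ for $E$ close to $\bar E$ (since $\widehat w$ is nearly optimal for $S_{1,r}(E)$). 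Shrinking $U$ so that this holds uniformly — note $u$ ranges over the finite-dimensional $\bar E^{-1}(0)$, and homogeneity of degree $2$ in $u$ reduces to a compact set of $u$'s — completes the proof.

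The main obstacle is the step extracting, for $E$ near $\bar E$, a near-optimal competitor $\widehat w$ for $S_{1,r}(E)$ that is genuinely \emph{admissible} for $\Lb_{r,\bar E}$ (i.e.\ with $\widehat w\circ\psi_i\in\bar E^{-1}(0)$); this is precisely the content of Sabot's lemma, which controls how the minimizing extension degenerates as the form approaches a degenerate eigenform, and whose proof — as the paper notes — is long. Once that structural input is granted, the remaining inequalities are the routine convexity/continuity manipulations sketched above.
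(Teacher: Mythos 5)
Your argument is correct and is essentially the paper's own proof: the paper likewise takes Sabot's Prop.\ 4.23 as a black box (in the equivalent form $\Lambda_r(E)(u)\ge\alpha\,\Lambda_{r,\bar E}(E)(u)$ for $E$ near $\bar E$ and $u\in\bar E^{-1}(0)$, which is exactly your near-optimal admissible competitor $\widehat w$) and then concludes as in your final chain, applying Lemma \ref{5.3} ii) termwise to the admissible competitor and using the definition of $\Lambda_{r,\bar E}$. Note only that your interpolated continuity sketch (``projecting $w^{E}$ changes the energies by $o(1)$ relative to the quantities involved'') could not by itself replace Sabot's result, since both $\Lambda_r(E)(u)$ and $\eta_E$ tend to $0$ as $E\to\bar E$ for $u\in\bar E^{-1}(0)$, so an additive error estimate proves nothing and the needed multiplicative control is precisely the hard content that you, like the paper, defer to Sabot.
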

\begin{proof}
This is a consequence of the arguments in \cite{Sa}.
For example, by  \cite{Sa},  Prop. 4.23 
(see also Prop. 23 in \cite{Me}) there exists $U$  neighborhood
of $\bar E$ such that  for every $E\in U\cap \cdni{1}$ and every
 $u\in \bar E^{-1}(0)$ we have 
 $\Lb_r(E)(u)\ge \alpha  \Lb_{r,\bar E}(E)(u)$. The Lemma follows from
 Lemma \ref{5.3} iii) and the definition of $\Lb_{r,\bar E}(E)$.
\end{proof}

\begin{lemma}\label{5.5}
Let $r\in W$.
Then every  repulsing degenerate $r$-eigenform
 $\bar E\in\cdni{3}$ is anti-attracting for $\Lbt_r$.
\end{lemma}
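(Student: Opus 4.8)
The plan is to show that a repulsing degenerate $r$-eigenform $\bar E\in\cdni{3}$ is anti-attracting for $\Lbt_r$ by exhibiting a neighborhood $U$ of $\bar E$ such that, for every $E\in U\cap\cdni{1}$, we have $\Lbt_r(E)\notin Ext(E)$. As in the proof of Lemma \ref{4.3}, by Lemma \ref{3.00} iv) it suffices to force $\Lbt_r(E)\notin[E,p(E)]$. The mechanism mirrors Lemma \ref{4.2}: if $\Lbt_r(E)\in[E,p(E)]$ then, using $E\in[p(E),\witi E]$ from Lemma \ref{3.00} iii) together with $p(\bar E)=\bar E$ and continuity, one gets a componentwise bound $\Lbt_r(E)\preceq C\,E$ for some fixed constant $C$ on a suitable $U$ (the same argument giving (4.2)--(4.5) in Lemma \ref{4.2}). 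Rescaling via $\alpha\Lbt_r(E)=\Lb_r(E)$ and controlling $\alpha$ through $(2.1)$ and the eigenvalue $\rho$ of $\bar E$, this translates into an upper estimate $\Lb_r(E)\le \rho'' E$ on $U$, with $\rho''$ bounded in terms of $\rho$ and $C$; the point is that when $E$ is close to $\bar E$ the normalizing factor $|\Lb_r(E)|$ stays close to $\rho$ since $\Lb_r(\bar E)=\rho\bar E$ and $|\bar E|=1$.

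The contradiction then comes from the repulsing hypothesis $(5.1)$ combined with Lemma \ref{5.4}. Restricting $u$ to the kernel $\bar E^{-1}(0)$, for $E\in U\cap\cdni{1}$ we have the chain
$$\Lb_r(E)(u)\ge \alpha\eta_E\,\Lb_{r,\bar E}(\witi E)(u)\ge \alpha\eta_E\rho'\,\witi E(u)\ge \alpha\rho'\,E(u)$$
for all $u\in\bar E^{-1}(0)$, where the first inequality is Lemma \ref{5.4}, the second is $(5.1)$, and the third is Lemma \ref{5.3} ii). Here $\alpha<1$ is at our disposal and $\rho'\ge\rho$. On the other hand the assumed bound $\Lb_r(E)\le\rho'' E$, tested on nonconstant $u\in\bar E^{-1}(0)$, would give $\Lb_r(E)(u)\le\rho'' E(u)$. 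So if we arrange the neighborhood $U$ small enough that $\rho''<\alpha\rho'$ — which is possible because $\rho''$ can be taken arbitrarily close to $\rho\le\rho'$ while $\alpha$ can be chosen close to $1$ — we contradict the existence of a nonconstant $u\in\bar E^{-1}(0)$ (such $u$ exists since $\bar E\in\cd\setminus\td$ has kernel strictly larger than the constants, and $\Lb_{r,\bar E}(\witi E)(u)>0$ for such $u$ as $\witi E\in\td$ and the infimum in $\Lb_{r,\bar E}$ is over functions constant on no cell only when $\witi E$ forces it). Hence $(4.4)$-type conclusion $\Lbt_r(E)\notin Ext(E)$ holds on $U\cap\cdni{1}$, so $\bar E$ is anti-attracting.

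The step I expect to be the main obstacle is the careful bookkeeping of constants: one must choose, in the right order, first $\alpha<1$ and the target $\rho''$ with $\rho''<\alpha\rho'$, then shrink $U$ so that simultaneously (i) Lemma \ref{5.4} applies with this $\alpha$, (ii) $p(E)_d\le C E_d$ holds with the $C$ that the componentwise argument requires, and (iii) $|\Lb_r(E)|$ stays close enough to $\rho$ that the rescaling yields exactly $\Lb_r(E)\le\rho'' E$. All three conditions are open and hold at $\bar E$, so a common $U$ exists, but laying this out cleanly — especially the interplay between the normalization factor $\alpha$ in $\alpha\Lbt_r(E)=\Lb_r(E)$ and the parameter also called $\alpha$ in Lemma \ref{5.4}, which should be disambiguated — is where the proof needs the most attention. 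Everything else is a direct adaptation of the pattern already established in Lemmas \ref{4.1}, \ref{4.2} and \ref{4.3}.
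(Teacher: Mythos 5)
Your overall architecture is the same as the paper's: a lower bound on the kernel coming from Lemma \ref{5.4} together with the repulsing hypothesis (5.1), an upper bound coming from the segment geometry of Lemma \ref{3.00} under the assumption $\Lbt_r(E)\in Ext(E)$, and a contradiction on a nonconstant element of $\bar E^{-1}(0)$; your only real variation is that you produce the upper bound componentwise, in the style of (4.2), instead of evaluating the segment relations directly at one function. However, the displayed chain contains a genuine directional error: the third inequality, $\alpha\eta_E\rho'\,\witi E(u)\ge\alpha\rho'\,E(u)$, is the \emph{reverse} of Lemma \ref{5.3} ii), which asserts $E(u)\ge\eta_E\witi E(u)$ for all $u\in\bar E^{-1}(0)$. (Despite the word ``maximum'' in Lemma \ref{5.2}, the inequality that is actually available, and the one needed for Lemma \ref{5.4} to follow from Sabot's estimate, is the one stated in \ref{5.3} ii); $\eta_E$ plays the role of the extremal, in fact smallest, value of $E/\witi E$ on the kernel.) So your chain is false for a general nonconstant $u$ in the kernel, and the concluding sentence ``we contradict the existence of a nonconstant $u$'' does not follow as written.

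The repair is precisely the device the paper uses: invoke Lemma \ref{5.3} iii) rather than ii), i.e.\ work at the particular nonconstant $\bar u\in\bar E^{-1}(0)$ with $E(\bar u)=\eta_E\witi E(\bar u)$. At that single function your chain is valid, giving $\Lb_r(E)(\bar u)\ge\alpha\rho'E(\bar u)$, and your componentwise bound $\Lb_r(E)\le\rho''E$, being an inequality of forms, can be tested at $\bar u$ (where $E(\bar u)>0$ since $E\in\cdni{1}\subseteq\td$), yielding the contradiction once $\rho''<\alpha\rho'$. With this fix your route works and differs from the paper only in bookkeeping: the paper avoids any componentwise constant by evaluating $E\in[p(E),\witi E]$ and $\Lbt_r(E)\in[E,p(E)]$ at $\bar u$ to get $\Lbt_r(E)(\bar u)\le E(\bar u)=\eta_E\witi E(\bar u)$, against its lower bound (5.3); you instead must justify that the constant in $p(E)_d\le C E_d$ can be taken arbitrarily close to $1$ (true: $p(E)_d\le E_d$ for $d$ with $\bar E_d=0$, and $p(E)_d/E_d\to1$ by continuity of $p$ when $\bar E_d>0$) and that $|\Lb_r(E)|\to\rho$; the fixed constant $2$ of (4.2) would not do, since it only gives $\rho''$ near $2\rho$. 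Note finally that, like the paper's own proof, your argument needs $\rho'>\rho$ strictly in the last step, although (5.1) is stated with $\rho'\ge\rho$.
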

\begin{proof}
By Lemma 5.4 and (5.1), given $\alpha\in]0,1[$,
we find a neighborhood $U$ of $\bar E$ such that,
if $E\in U\cap\cdni{1}$ and $u\in\bar E^{-1}(0)$ we have
$$ \Lb_r(E)(u)\ge\rho'\alpha\eta_E \witi E(u). \eqno (5.2)$$
Next, note that $\Lb_r  (\bar E)=\rho\bar E$,  
hence $|\Lb_r  (\bar E)|=\rho|\bar E|=\rho$.
Thus, for $u\in \bar E^{-1}(0)$ we have
$$  \Lbt_r(E)(u)$$
$$ ={ \Lb_r  (E)(u)\over |\Lb_r  (\bar E)|}  {|\Lb_r  (\bar E)|\over |\Lb_r (E)|}$$
$$\ge {\rho'\over\rho}    \alpha  \eta_E {|\Lb_r  
(\bar E)|\over |\Lb_r (E)|} \witi E (u).$$
Also, possibly restricting $U$, by the continuity 
of $\Lb_r$ we can assume that
$|\Lb_r(\bar E)|\ge\alpha   |\Lb_r(E)|$. Hence
$ \Lbt_r(E)(u)\ge \alpha^2 {\rho'\over\rho}
\eta_E   \witi E (u)$. Since $\rho'>\rho$ we can choose $\alpha$ such that
$\alpha^2{\rho'\over\rho}>1$. Thus if $u\in\bar E^{-1}(0)$ is non-constant we have
$$\Lbt_r(E)(u)>\eta_E \witi E(u).\eqno (5.3)$$
It follows that $\bar E$ is anti-attracting for $\Lbt_r$ since
$\Lbt_r(E)\notin Ext(E)$ for every $E\in U\cap\cdni{1}$.
In fact, in the opposite case,
  by Lemma \ref{3.00} iv), we have 
 $$\Lbt_r(E)\in[E,p(E)].\eqno (5.4)$$
  On the other hand, by Lemma \ref{3.00} iii) we have
  $$E\in [p(E),\witi E].\eqno (5.5)$$
  Next, note that, since $\bar E(\bar u)=0<\witi E(\bar u)$, possibly restricting
  $U$, we can assume that $E(\bar u)<\witi E(\bar u)$. Thus,
  by (5.5) we have $p(E)(\bar u)\le E(\bar u)$, thus, by (5.4) and Lemma
  \ref{5.3} iii)
  we have $\Lbt_r(E)(\bar u)\le E(\bar u)=\eta_E \witi E(\bar u)$.
  This contradicts (5.3) and the Lemma is proved. \end{proof}

\begin{teorema}
Suppose that  every degenerate $r$-eigenform in $\cdni{3}$ is repulsing.
Then there exists an $r$-eigenform.
\end{teorema}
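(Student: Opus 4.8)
The plan is to obtain the eigenform as a fixed point of the normalized renormalization operator $\Lbt_r$ by invoking Theorem \ref{3.2}. I would take $Z$ and $K=\cdn$ exactly as in Section 4, and use the already-fixed form $\witi E\in\cdni{1}=\innt(\cdn)$ as the distinguished interior point. Recall that Section 4 provides, for free, that $\Lbt_r$ is a continuous self-map of $\cdni{1}$, and that, since $\cdni{1}\subseteq\td$ and $\Lb_r$ preserves $\td$ (so $\Lb_r(E)$ is a non-zero form and $\big|\Lb_r(E)\big|>0$), any fixed point $E$ of $\Lbt_r$ in $\cdni{1}$ satisfies $\Lb_r(E)=\big|\Lb_r(E)\big|\,E$, i.e.\ is an $r$-eigenform with eigenvalue $\big|\Lb_r(E)\big|$. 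Hence it suffices to check the hypotheses of Theorem \ref{3.2}, the only non-obvious one being that $\Lbt_r$ is anti-attracting, i.e.\ that \emph{every} $\bar E\in\partial\cdn$ is anti-attracting for $\Lbt_r$.

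The verification would run through the decomposition $\partial\cdn=\cdni{2}\cup\cdni{3}\cup\cdni{4}$. First, if some $\bar E\in\cdni{2}$ is a fixed point of $\Lbt_r$, then by the remark above it is already an $r$-eigenform (it lies in $\td$) and the theorem is proved; so I may assume that no point of $\cdni{2}$ is fixed by $\Lbt_r$. Under this assumption: every $\bar E\in\cdni{2}\cup\cdni{3}$ with $\Lbt_r(\bar E)\ne\bar E$ is anti-attracting by Lemma \ref{4.3}; every $\bar E\in\cdni{4}$ is anti-attracting by Lemma \ref{4.2}; and every remaining boundary point, i.e.\ every $\bar E\in\cdni{3}$ with $\Lbt_r(\bar E)=\bar E$, is by definition a degenerate $r$-eigenform in $\cdni{3}$, hence repulsing by hypothesis, hence anti-attracting by Lemma \ref{5.5}. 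Since these cases cover all of $\partial\cdn$, the map $\Lbt_r$ is anti-attracting.

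Theorem \ref{3.2} then applies to $\phi=\Lbt_r$, $K=\cdn$ and $\witi x=\witi E$, and yields a fixed point $E^*\in\innt(\cdn)=\cdni{1}$ of $\Lbt_r$, which, by the opening remark, is the desired $r$-eigenform.

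The substance of the argument has already been carried out in the preparatory lemmas — Lemma \ref{5.4} (which leans on Sabot's estimate) and the transversality computations of Lemmas \ref{4.2}, \ref{4.3} and \ref{5.5} — so in the proof of the theorem itself I expect the only delicate point to be the bookkeeping over $\partial\cdn$: one must notice that a hypothetical fixed point of $\Lbt_r$ lying in $\cdni{2}$ causes no trouble because it is itself an eigenform, which is why the repulsing hypothesis need only be imposed on the genuinely dangerous boundary stratum $\cdni{3}$, and one must make sure that no boundary point escapes the case analysis (in particular that $\cdni{2}\cup\cdni{3}\cup\cdni{4}$ really exhausts $\partial\cdn$, as recorded in Section 4).
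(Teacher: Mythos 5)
Your proposal is correct and follows essentially the same route as the paper: the paper argues by contradiction (assuming no $r$-eigenform exists, so in particular $\Lbt_r$ has no fixed point in $\cdni{2}$), then invokes Lemmas \ref{4.2}, \ref{4.3} and \ref{5.5} to verify the anti-attracting hypothesis and applies Theorem \ref{3.2} with $K=\cdn$, $\phi=\Lbt_r$ to produce a fixed point in $\cdni{1}$. Your case-split treatment of a possible fixed point in $\cdni{2}$ is just the contrapositive phrasing of the same step, so the two arguments coincide in substance.
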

\begin{proof}
Suppose there exists no $r$-eigenform, in particular 
$\Lbt_r(E)\ne E$ for every $E\in \cdni{2}$.
By Lemmas 4.2, 4.3 and 5.5,  the hypothesis of
Theorem \ref{3.2} is satisfied with $K=\cdn$, and $\phi=\Lbt_r$.
Thus, there exists $\bar E\in\cdni{1}$ such that
$\Lbt_r(\bar E)=\bar E$, hence $\bar E$ is an $r$-eigenform.
\end{proof}

\end{document}